\documentclass[12pt]{amsart}
\usepackage{amsmath,amssymb,amsfonts,latexsym,amscd,psfrag,mathabx,graphicx,mathrsfs,stmaryrd}
\usepackage[square,sort,comma,numbers]{natbib}
\usepackage{graphicx,tikz,tkz-berge,tkz-graph}
\usepackage{mathtools}
\usepackage[utf8]{inputenc}
\usepackage[T1]{fontenc}
\usepackage{lmodern}

\usepackage{tikz}
\usepackage{verbatim}
\usetikzlibrary{shapes,shadows,calc}
\usepgflibrary{arrows}
\usetikzlibrary{arrows, decorations.markings, calc, fadings, decorations.pathreplacing, patterns, decorations.pathmorphing, positioning}
\tikzset{nodc/.style={circle,draw=blue!50,fill=pink!80,inner sep=4.2pt}}
\tikzset{nod1/.style={circle,draw=black,fill=black,inner sep=1pt}}
\tikzset{noddee/.style={circle,draw=black,fill=black,inner sep=1.6pt}}
\tikzset{nod2/.style={circle,draw=black,fill=black,inner sep=2pt}}
\tikzset{nod3/.style={circle,draw=black,fill=black,inner sep=3pt}}
\tikzset{nodempty/.style={circle,draw=black,inner sep=2pt}}
\tikzset{nodel/.style={circle,draw=black,inner sep=2.2pt}}

\tikzset{nodinvisible/.style={circle,draw=white,inner sep=2pt}}
\tikzset{nodpale/.style={circle,draw=gray,fill=gray,inner sep=1.6pt}}

\usepackage{scalefnt}
\usetikzlibrary{arrows,decorations.pathmorphing,backgrounds,positioning,fit,petri}

\usepackage{subfigure}
\usepackage{float}
\usepackage[colorlinks, urlcolor=blue, linkcolor=blue, citecolor=blue]{hyperref}
\usepackage{cleveref}
\theoremstyle{plain}
\newtheorem{proposition}[equation]{Proposition}

\newtheorem{theorem}[equation]{Theorem}

\newtheorem{corollary}[equation]{Corollary}
\newtheorem{lemma}[equation]{Lemma}

\theoremstyle{definition}

\newtheorem{definition}[equation]{Definition}
\newtheorem{remark}[equation]{Remark}
\newtheorem{example}[equation]{Example}
\newtheorem{question}[equation]{Question} 
\newtheorem{fact}[equation]{Fact} 
\usepackage[colorlinks, urlcolor=blue, linkcolor=blue, citecolor=blue]{hyperref}
\newcommand{\Q}{\mathbb{Q}}

\newcommand{\SA}{\mathbb{S}}

\newcommand{\BE}{\mathfrak{B}}

\newcommand{\CE}{\mathcal{C}}

\newcommand{\RE}{\mathcal{R}}

\newcommand{\HE}{\mathcal{H}}

\newcommand{\NE}{\mathcal{N}}

\newcommand{\kk}{\Bbbk}
\newcommand{\hf}{\mathfrak{h}}

\newcommand{\Ind}{\operatorname{Ind}}
\newcommand{\pd}{\operatorname{prod-dim}}

\newcommand{\IR}{\operatorname{IR}}
\newcommand{\cd}{\operatorname{cochord}}
\newcommand{\reg}{\operatorname{reg}}

\newcommand{\girth}{\operatorname{girth}}

\newcommand{\im}{\operatorname{im}}
\newcommand{\m}{\operatorname{m}}

\newcommand{\degen}{\operatorname{degen}}

\newcommand{\IN}{\operatorname{Inci}}

\newcommand{\CNS}{\operatorname{CNS}}
\newcommand{\red}{\operatorname{red}}

\newcommand{\D}{\Delta}

\begin{document}
\title[Projective dimension of closed neighborhood hypergraphs]{Projective dimension of closed neighborhood hypergraphs via extended double covers}

\author{Yusuf Civan and Anurag Singh}
\address{Department of Mathematics, S{\"{u}}leyman Dem{\.{\.i}}rel University,
Isparta, 32260, T{\"{u}}rk{\.{\.i}}ye.}
\email{yusufcivan@sdu.edu.tr}
\address{Indian Institute of Technology, Bhilai, Chhattisgarh-491002, India.}
\email{anurags@iitbhilai.ac.in}
	
\keywords{(Castelnuovo-Mumford) regularity, projective dimension, closed neighborhood hypergraph, domination, matching.}
\date{\today}
%\thanks{}
\subjclass[2020]{05E40, 05E45, 05C65, 05C69.}
	
\begin{abstract}
Let $G$ be a finite and simple graph without isolated vertices. We investigate the projective dimension of the closed neighborhood hypergraph $\NE[G]$ and its relationship with the Castelnuovo-Mumford regularity of the extended bipartite double cover $\BE_e(G)$ of $G$. We establish the general upper bound $\pd(\NE[G])\leq \reg(\BE_e(G))$ for all graphs. Furthermore, we prove that the exact equalities $\pd(\NE[G])=\reg(\BE_e(G))=\alpha(G)$ hold when $G$ belongs to several prominent graph classes, including K\"onig-Egerv\'ary (contains all bipartite graphs), cographs, co-chordal, chordal and comparability graphs, where $\alpha(G)$ denotes the independence number. Our method of proofs relies on connecting algebraic invariants to the underlying combinatorial structure of graphs through covering, domination and matching parameters, together with the use of homology tools.
\end{abstract} 
\maketitle
	
%%%%%%%%%%%%%%%%%%%%%%%%%%%%%%%%%%%%%%%%%%%%%%%%%%%%%%%%%%%%
\section{Introduction}\label{sect:intro}
A \emph{hypergraph} $\HE$ on a vertex set $V=\{x_1,\ldots,x_n\}$ is a family of subsets (edges) of $V$.  We identify the set of edges with the hypergraph $\HE$ itself. When $R=\kk[x_1,\ldots,x_n]=\bigoplus_{t=0}^{\infty} R_t$ is a polynomial ring over a field $\kk$ with the standard grading, the \emph{edge ideal} $I(\HE)$ of the hypergraph $\HE$ is defined to be the ideal of $R$ generated by all squarefree monomials $x_e:=\prod_{x_i\in e} x_i$ such that $e\in \HE$.
	
Consider the minimal free graded resolution of $R/I(\HE)$ as an $R$-module:
\begin{equation*}
0\to \bigoplus_j R(-j)^{\beta_{t,j}}\to \ldots \to \bigoplus_j R(-j)^{\beta_{1,j}}\to R\to R/I(\HE)\to 0.
\end{equation*}
The \emph{Castelnuovo-Mumford regularity} (simply the \emph{regularity}) and the \emph{projective dimension} of $R/I(\HE)$ are defined as
\begin{align*}
&\reg_{\kk}(\HE):=\reg_{\kk}(R/I(\HE))=\max \{j-i\colon \beta_{i,j}\neq 0\},\\
&\pd_{\kk}(\HE):=\pd_{\kk}(R/I(\HE))=\max \{i\colon \beta_{i,j}\neq 0\}
\end{align*}
respectively\footnote{Unless otherwise stated, our results are independent of the characteristic of the coefficient field. So, wherever it is appropriate, we drop $\kk$ from our notation.}.
	
Throughout we mainly consider hypergraphs associated to graphs in a certain manner. Let $G=(V,E)$ be a (finite and simple) graph without any isolated vertex. The hypergraph defined by 
\begin{displaymath}
\NE[G]:=\{N_G[x]\colon x\in V\}
\end{displaymath} 
is known as the \emph{closed neighborhood hypergraph} of $G$. These hypergraphs have  already appeared in different contexts. It is known that the recognition of neighborhood hypergraphs is polynomially equivalent to the matrix symmetrization problem~\citep{boros2008}. On the other side, the independence complex 
$\Ind(\NE[G])$ is known as the \emph{dominance complex} of the underlying graph $G$, and the (homological) connectivity of these complexes are well-studied~\citep{matsushita2022,matsushita2023}.  
	
The algebraic properties of closed neighborhood hypergraphs have been explored so far through their associated squarefree monomial ideals~\citep{sharifan2020}. 
Recently, Chakraborty et al.~\citep{anurag2024} establish exact combinatorial descriptions for the regularity of specific graph classes, while Joseph, Roy, and Singh~\citep{anurag2025} utilize Barile-Macchia resolutions to analyze their minimal free resolutions. Beyond projective dimension and regularity calculations, the determination of graph classes for which these ideals are Cohen-Macaulay also receives an important attention ~\citep{moradi2026, honeycutt, leaman}. Additionally, various other structural features such as their associated primes, normality, and asymptotic behavior have been investigated~\citep{hien, nasernejad1, nasernejad2, nasernejad3}.	
	
The primary concern of our current work is to compute the projective dimension of closed neighborhood hypergraphs in terms of known parameters of the underlying graphs. Our main approach is based on the verification that the projective dimension of closed neighborhood hypergraphs can be bounded from above by the regularity of bipartite graphs constructed from the underlying graphs. In fact, the more is true. In a recent work, B{\i}y{\i}ko{\u{g}}lu et al.~\cite{yusuf} introduce a bipartite graph (the bipartite complement of the facet-vertex bipartite graph) $B(\D)$ associated to any arbitrary simplicial complex $\D$, and prove that the regularity of $\D$ can be bounded from above by that of the independence complex of $B(\D)$ (compare to~\citep[Theorem~$3$]{yusuf}). When $\D=\Ind(\NE[G])^{\vee}$, the Alexander dual of the independence complex of the closed neighborhood hypergraph of a graph $G$ without any isolated vertex, the bipartite graph $B(\D)$ turns out to be a known bipartite transform of $G$, that was introduced by Alon~\cite{NA}.

Let $G$ be a graph on the vertex set $V=\{v_1,v_2,\ldots,v_n\}$. Then the \emph{extended double cover} $\BE_e(G)$ of $G$ is the bipartite graph on $V\cup V'$, where $V'=\{v'_1,v'_2,\ldots,v'_n\}$, in which $v_i$ and $v'_j$ are adjacent if and only if $i=j$ or $v_i$ and $v_j$ are adjacent in $G$. 
	
\begin{theorem}\label{thm:thm1}
The inequality $\pd(\NE[G])\leq \reg(\BE_e(G))$ holds for every graph $G$.
\end{theorem}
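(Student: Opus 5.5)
Throughout, $\reg(\BE_e(G))$ means $\reg(R'/I(\BE_e(G)))$, where $R'$ is the polynomial ring on $V\cup V'$. The plan is to follow the route indicated in the introduction, through Alexander duality and the bipartite graph $B(\D)$ of B{\i}y{\i}ko{\u{g}}lu et al.~\cite{yusuf}.

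\emph{Step 1: move to a regularity statement.} By Terai's theorem, for a squarefree monomial ideal $I$ we have $\pd(R/I)=\reg(I^{\vee})=\reg(R/I^{\vee})+1$. Here $I^{\vee}$ is the Alexander dual ideal, i.e.\ the Stanley--Reisner ideal of the Alexander dual complex. Set $\D=\Ind(\NE[G])^{\vee}$. Then $\pd(\NE[G])$ is expressed through the regularity of $\D$; the exact form depends on whether one indexes by the ring or by the ideal. By Hochster's formula, this regularity is read off from the vanishing of reduced homology of induced subcomplexes of $\D$.

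\emph{Step 2: identify $B(\D)$.} The minimal nonfaces of $\Ind(\NE[G])$ are the minimal closed neighborhoods. Hence the facets of $\D$ are the complements $V\setminus N_G[x]$, taken over those $x$ whose closed neighborhood is inclusion-minimal. I will check that the bipartite complement of the facet--vertex incidence graph joins the facet indexed by $x$ to a vertex $v$ exactly when $v\in N_G[x]$. If every $N_G[x]$ is minimal, this graph is $\BE_e(G)$, with $x\mapsto x'$.

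If some neighborhoods are not minimal, $B(\D)$ is the induced subgraph of $\BE_e(G)$ on $V$ together with the primed copies of the minimal-neighborhood vertices. Regularity of edge ideals is monotone under induced subgraphs, so $\reg(B(\D))\le\reg(\BE_e(G))$. Alternatively, one can work with the non-minimal generating set directly; the ideal $I(\NE[G])$ is unchanged, and one checks that the bound of~\cite{yusuf} still applies with repeated or non-minimal "facets".

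\emph{Step 3: apply the earlier result and match indices.} Invoke \citep[Theorem~3]{yusuf}, which bounds the regularity of $\D$ by that of $\Ind(B(\D))$, i.e.\ by $\reg(B(\D))$. Combining this with Steps 1 and 2 gives $\pd(\NE[G])\le\reg(\BE_e(G))$.

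\emph{Main obstacle.} I expect the difficulty to be bookkeeping. The shift by one in Terai's formula must line up with the normalization in~\cite{yusuf}, which could be $\reg(\D)$ versus $\reg(\kk[\D])$. If the indices do not match directly, I would instead prove the bound by hand. Take a nonzero $\tilde H_{i-1}$ of an induced subcomplex of $\D$ that realizes the maximum in Hochster's formula. Then produce an induced subgraph $H$ of $\BE_e(G)$ whose independence complex has matching nonvanishing homology, so that $\reg(\BE_e(G))\ge\reg(H)$ is at least the required value. The tool for this is a nerve or covering argument relating the subcomplex of $\D$ to $\Ind(H)$.
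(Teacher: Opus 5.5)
Your proposal is correct and follows the paper's route. In Theorem~\ref{thm:ineqs}, the paper passes to $\NE_{\red}[G]$ via Fact~\ref{fact:remove}. It identifies $B(\Ind(\NE_{\red}[G])^{\vee})$ with $\BE_{\red}(G)=\IN(\NE_{\red}[G])$, applies \citep[Theorem~3]{yusuf} together with Terai's duality, and finally uses that $\BE_{\red}(G)$ is an induced subgraph of $\BE_e(G)$, which matches your Steps~1--3. The paper also leaves the index bookkeeping implicit, so your fallback argument is not needed. One small imprecision: when $G$ has true twins ($N_G[x]=N_G[y]$), the corresponding facets coincide, so $B(\D)$ is a proper induced subgraph of $\BE_e(G)$ even if all neighborhoods are minimal; your induced-subgraph monotonicity remark already covers this case.
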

	
We note that the inequality of Theorem~\ref{thm:thm1} could be strict in general. In fact, the gap $\reg(\BE_e(G))-\pd(\NE[G])$ could be arbitrarily large.
	
Since closed neighborhood hypergraphs are grounded over graphs, the structural properties and invariants of the underlying graphs play a distinguished role on the determination of their algebraic properties. In particular, the relevant covering,  independence, matching and domination parameters will be essential invariants both for calculations and possible bounds on respective algebraic parameters of associated edge rings of these hypergraphs. While exact homological formulas have been recently discovered for certain classes, our bipartite translation allows us to prove a unifying sequence of exact equalities across a substantially broader array of graph families. In this direction, the main contribution of this paper is summarized in the following result.
	
\begin{theorem}\label{thm:thm2}
The equalities $\pd(\NE[G])=\reg(\BE_e(G))=\alpha(G)$ hold provided that $G$ belongs to any of the following classes:
\begin{enumerate}
\item  K{\"{o}}nig-Egerv{\'{a}}ry graphs (include all bipartite graphs) (Theorem~\ref{thm:konig_exact}),
\item cographs (Theorem~\ref{thm:cograph_exact}),
\item co-chordal graphs (Theorem~\ref{thm:cochordal_exact}),
\item chordal graphs (Theorem~\ref{thm:chordal_exact}),
\item comparability graphs (Theorem~\ref{thm:comparability}).
\end{enumerate}
\end{theorem}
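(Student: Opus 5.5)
Using Theorem~\ref{thm:thm1}, the chain reduces to two inequalities valid for the listed classes: a universal lower bound $\alpha(G)\le \pd(\NE[G])$ and a class-specific upper bound $\reg(\BE_e(G))\le \alpha(G)$. Then
\[
\alpha(G)\le \pd(\NE[G])\le \reg(\BE_e(G))\le \alpha(G).
\]

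\textbf{Lower bound.} Let $S=\{v_1,\dots,v_k\}$ be a maximum independent set of $G$. I would like the closed neighborhoods $N_G[v_1],\dots,N_G[v_k]$ to be pairwise disjoint, since then the corresponding generators of $I(\NE[G])$ form a regular sequence and $\pd\ge k$. However, independence alone does not give disjointness, because two vertices of $S$ may share a neighbor. So I would instead use Hochster's formula through the restriction lemma: $\pd(R/I)\ge \pd$ of any restriction $I|_W$, taking squarefree monomials supported on $W$. I would choose $W=V\setminus S$ together with a suitable subset, or more directly apply Hochster to the induced subcomplex on $W=\bigcup_i N[v_i]$. Alternatively, I would use the known lower bound of projective dimension by the size of a largest "strongly disjoint" family of edges of the hypergraph. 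If that also fails, I would instead take the Taylor-type lcm $\prod_{v\in V}x_v$ and show, via the Alexander dual / dominance complex, that $\tilde H$ of the dominance complex is nonzero in the right degree. Concretely, Hochster's formula gives $\beta_{i,V}\neq0$ iff $\tilde H_{n-i-1}(\Ind(\NE[G]))\ne0$. For the independent set, I would restrict to the subhypergraph induced on $N[S]$ and show its dominance complex has nontrivial homology of the right dimension.

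\textbf{Upper bound.} I would bound $\reg(\BE_e(G))$ by the matching-type invariant that controls it for bipartite graphs: the induced matching number bounds it below and the co-chordal cover number bounds it above. I would aim for $\reg(\BE_e(G))\le \alpha(G)$ class by class, either by covering $\BE_e(G)$ with $\alpha(G)$ co-chordal (bipartite chain) subgraphs or by inductive vertex splitting with $\reg(H)\le\max\{\reg(H-x),\reg(H-N[x])+1\}$.
\begin{itemize}
\item \emph{König--Egerváry graphs.} Use a maximum matching $M$ with $|M|=\tau(G)=n-\alpha(G)$, and build chain covers from the star structure of the independent set.
\item \emph{Chordal graphs.} Use a clique cover by $\alpha(G)$ cliques, since chordal graphs are perfect. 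For a clique $K$, the subgraph of $\BE_e(G)$ spanned by $K\cup K'$ is complete bipartite, and I would try to assign every remaining edge $vw'$ to a chain graph for the clique containing $v$. Such a star-of-clique chain structure gives co-chordal pieces.
\item \emph{Comparability graphs.} The same clique-cover argument applies, since these are perfect.
\item \emph{Cographs.} Induct on joins and disjoint unions, using that $\BE_e$ of a disjoint union is a disjoint union and a join behaves controllably.
\item \emph{Co-chordal graphs.} Use the splitting inequality on a simplicial vertex of the complement.
\end{itemize}

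\textbf{Main obstacle.} I expect the upper bound to be the hardest step, especially the verification that the pieces assigned to each clique of a minimum clique cover are genuinely co-chordal, meaning $2K_2$-free in the bipartite case. Edges $vw'$ with $v\in K_i$ and $w\in K_j$ may create induced $2K_2$'s. I would first try to order vertices within each clique by neighborhood inclusion, using perfect elimination or transitive orientation, so that each piece becomes a chain graph.
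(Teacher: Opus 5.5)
Your sandwich $\alpha(G)\le\pd(\NE[G])\le\reg(\BE_e(G))\le\alpha(G)$ is the paper's framework, but neither outer inequality is actually established.

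\textbf{Lower bound.} You list several candidate routes without completing any. The ``restrict to $N[S]$'' route is vacuous: a maximum independent set is dominating, so $N[S]=V$. The missing observation is short. A maximal independent set $S$ of $G$ is a minimal dominating set, i.e.\ a minimal transversal of $\NE[G]$. So $V\setminus S$ is a facet of $\Ind(\NE[G])$, and $S$ is a minimal non-face of $\Ind(\NE[G])^{\vee}$. The dual's subcomplex induced on $S$ is therefore $\SA^{|S|-2}$, whence $\reg(\Ind(\NE[G])^{\vee})\ge|S|-1$. Terai's duality then gives $\pd(\NE[G])\ge|S|=\alpha(G)$. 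This is Corollary~\ref{cor:helly}; taking a largest minimal dominating set, the paper even gets $\pd(\NE[G])\ge\Gamma(G)$. Algebraically, $(x_s\colon s\in S)$ is a minimal prime of $I(\NE[G])$ of height $\alpha(G)$, and $\pd$ is at least the big height.

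\textbf{Upper bound.} Only your co-chordal sketch is the paper's argument. It is Lemma~\ref{lem:reg-deg-complement}: take a simplicial vertex $v$ of $\overline G$, split at $v$ and then at $v'$, and bound $\reg(B-N_B[v])$ by the side size $\deg_{\overline G}(v)\le\omega(\overline G)-1=\alpha(G)-1$. The other cases are placeholders, and the one concrete construction you give fails.

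Sending every edge $vw'$ to the piece of the clique containing $v$ already breaks for the path $a_2\,a\,v_1\,v_2\,b\,b_2$. Its unique minimum clique cover is $\{a_2a,v_1v_2,bb_2\}$. The piece of $\{v_1,v_2\}$ contains $v_1a'$ and $v_2b'$, which form an induced $2K_2$ of $\BE_e(G)$ because $b\notin N_G[v_1]$ and $a\notin N_G[v_2]$. Reordering inside the clique cannot help, since $N_G[v_1]$ and $N_G[v_2]$ are incomparable. Perfection alone cannot rescue such an argument either: Example~\ref{example:cd-alpha} is strongly perfect with $\chi(\overline G)=\alpha(G)=3$, yet $\cd(\BE_e(G))\ge 4$.

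The paper instead fixes a global order and keeps $uv'$ and $vu'$ in the same piece. For chordal $G$, the cliques $Q_i$ come from a greedy maximum independent set along a PEO, and both cross-edges of $uv$ (with $u$ earlier) go to the piece of $f(u)$. For comparability $G$, the cliques are Dilworth chains, and $cx'$, $xc'$ with $c\le x$ go to the chain of $c$. In both cases, two edges in one piece have anchors in a common clique. The forward neighborhood of the earlier anchor (respectively transitivity along the chain) then supplies the edge that destroys any $2K_2$.

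The remaining classes also need a specific ingredient you leave out:
\begin{itemize}
\item For K{\"{o}}nig--Egerv{\'{a}}ry graphs you need a maximum matching joining a maximum independent set $S$ to $V\setminus S$. Then take double stars at $zz'$ for each unmatched $z$, and at $y_iy_i'$ augmented by $x_ix_i'$ for each matched pair $x_iy_i$ with $x_i\in S$. This gives $n-\m(G)=\alpha(G)$ co-chordal pieces.
\item For cographs, the join step needs the directional cross-edge sets $E_{G_1,G_2}$ and $E_{G_2,G_1}$ absorbed into two different pieces. This is exactly where $\alpha(G)\ge 2$ is used.
\end{itemize}
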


We point out that all graph classes (except possibly non-bipartite K{\"{o}}nig-Egerv{\'{a}}ry graphs) appearing in Theorem~\ref{thm:thm2} have a common property, each of these classes constitutes a subclass of strongly perfect graphs (see Section~\ref{sec:prep} for the relevant definition). Thus, one may ask whether similar equalities hold for the broad class of strongly perfect graphs. On the one hand, we believe that a positive answer will not be a surprise. On the other hand, the method of proofs presented in this paper is not sufficient enough to resolve the general case of strongly perfect graphs (compare to Example~\ref{example:cd-alpha}). 

\begin{remark}
In an earlier unpublished work~\citep{BC-prod}, B{\i}y{\i}ko{\u{g}}lu and the first author have claimed~\citep[Theorem $3.23$]{BC-prod} that the equality $\pd(\NE[G])=\Gamma(G)$ holds for every graph $G$, where $\Gamma(G)$ denotes the upper domination number of $G$.  However, the proof of their result was dependent on~\citep[Theorem $5.4$]{ha-russ}, which was later shown to be wrong (compare to~\citep[Remark $8$]{Civan2022}). In particular, the equality $\pd(\NE[G])=\Gamma(G)$ does not need to hold as demonstrated by means of many examples in the present work. 
\end{remark}

%%%%%%%%%%%%%%%%%%%%%%%%%%%%%%%%%%%%%%%%%%%%%%%%%%%%%%%%%%%%%%%%
\section{Preliminaries}\label{sec:prep}
We first recall some general notions and notations needed throughout the
paper, and repeat some of the definitions mentioned in the introduction more
formally.

A \emph{hypergraph} $\HE$ is a family of nonempty subsets of a (finite) set $V$. The elements of $V$  and $\HE$ are vertices and edges of $\HE$. A vertex $v\in V$ is \emph{isolated} in $\HE$ if there exists no edge $e\in \HE$ with $v\in e$. A hypergraph $\HE$ is $k$-\emph{uniform} if $|e|=k$ for every $e\in \HE$. For instance, graphs are $2$-uniform hypergraphs. A hypergraph $\HE$ is said to be \emph{simple} (or \emph{Sperner}) if $e,f\in \HE$ with $e\subseteq f$, then $e=f$. 
	
By a (simple) graph $G=(V,E)$, we mean a finite undirected graph without loops or multiple edges. If $U\subset V$, the graph induced on $U$ is written $G[U]$, and in particular, we abbreviate $G[V\setminus\{x\}]$ to $G-x$.
	
When $G=(V,E)$ is a graph, the open and closed neighborhood of a vertex $v$ are $N_G(v)=\{u\in V\colon uv\in E\}$ and $N_G[v]=N_G(v)\cup \{v\}$, respectively. In particular, we set $N_G[A]:=\bigcup\limits_{a\in A}N_G[a]$.
	
A subset $I\subseteq V$ is \emph{independent} if no pair of vertices of $I$ forms an edge in $G$. The \emph{independence number} $\alpha(G)$ of $G$ is the maximum size of an independent set in $G$. Similarly, a subset $K\subseteq V$ is a clique of $G$ if every pair of vertices in $K$ forms an edge in $G$. The maximum size $\omega(G)$ of a clique in $G$ is the \emph{clique number} of $G$. 
	
We write bipartite graphs in the form $B=(X,Y;E)$ in which the vertex set $V(B)$ has the corresponding bipartition $X\cup Y$. Throughout $K_n$, $K_{n,m}$, $P_n$ and $C_k$ will denote the complete, complete bipartite, path and cycle graphs on $n,m\geq 1$ and $k\geq 3$ vertices respectively. In particular, $K_{1,m}$ is known as the \emph{star graph}. The vertex of $K_{1,m}$ of degree $m$ is said to be its \emph{center vertex}. A \emph{double-star graph} $DS_{n,m}$ is a graph obtained from the disjoint union of two star graphs $K_{1,n}$ and $K_{1,m}$ by adding an edge between their center vertices. That additional edge is called the \emph{center edge} of the double-star $DS_{n,m}$. In particular, we simply write $DS_n$ instead of $DS_{n,n}$ when $n=m$.

For a hypergraph $\HE$, the \emph{incidence graph} $\IN(\HE)$ of $\HE$ is the bipartite graph on $V\cup \HE$ such that $(v,e)\in E(\IN(\HE))$ if and only if $v\in e$. 
	
A matching in a graph is a subset of edges no two of which share a vertex. The maximum size $\m(G)$ of a matching in $G$ is the \emph{matching number} of $G$. An induced matching of a graph $G$ is a matching $M$ if no two vertices belonging to different edges of $M$ are adjacent in $G$. The maximum size of an induced matching of $G$ is known as the \emph{induced matching number} $\im(G)$ of $G$. 
	
We say that $G$ is $H$-free if no induced subgraph of $G$ is isomorphic to $H$. A graph $G$ is called \emph{chordal} if it is $C_r$-free for all $r>3$. Note that there are various alternative characterizations of chordal graphs~\citep{BLS-gclass}. Furthermore, recall that a vertex $v$ is said to be a \emph{simplicial vertex} if $N_G[v]$ is a clique of $G$. It is known that every chordal graph contains at least one simplicial vertex.

A graph $G$ is said to be \emph{co-chordal} if its complement $\overline{G}$ is a chordal graph. A \emph{co-chordal covering} of a graph $G$ is a family of co-chordal subgraphs of $G$ whose edge sets cover the edge set of $G$. The minimum size of such a covering is called the \emph{co-chordal covering number} of $G$, denoted by $\cd(G)$.
	
\begin{lemma}\textnormal{\citep{MK-im,russ}}\label{lem:cochord}
$\im(G)\leq \reg(G)\leq \cd(G)$ for every graph $G$.
\end{lemma}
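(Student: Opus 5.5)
The plan is to prove the two inequalities separately. Both rest on Hochster's formula: for $W\subseteq V$ we have $\beta_{i,W}(R/I(G))=\dim_{\kk}\tilde H_{|W|-i-1}(\Ind(G)[W];\kk)$, where $\Ind(G)[W]=\Ind(G[W])$. Consequently
\[
\reg(G)=\max\{\,j+1 : \tilde H_j(\Ind(G[W]))\neq 0 \text{ for some } W\subseteq V\,\}.
\]
In other words, $\reg(G)$ is one more than the Leray number of $\Ind(G)$.

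\emph{Lower bound.} Let $M=\{a_1b_1,\ldots,a_kb_k\}$ be an induced matching with $k=\im(G)$, and put $W=\{a_1,b_1,\ldots,a_k,b_k\}$. Since $M$ is induced, $G[W]$ is the disjoint union of $k$ edges. Hence $\Ind(G[W])$ is the join of $k$ copies of the $0$-sphere $\{a_i\}\sqcup\{b_i\}$, i.e. the boundary of the $k$-dimensional cross-polytope, which is a $(k-1)$-sphere. Therefore $\tilde H_{k-1}(\Ind(G[W]))\neq 0$, so $\beta_{k,2k}(R/I(G))\neq 0$ and $\reg(G)\geq 2k-k=k$.

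\emph{Upper bound.} Let $G_1,\ldots,G_c$ be a co-chordal covering with $c=\cd(G)$, and regard each $G_i$ as a graph on the full vertex set $V$ by adding isolated vertices. Adding isolated vertices keeps the complement chordal up to the new vertices being joined to everything, so chordality of the complement is preserved, and by the Hochster formula above it also leaves the regularity unchanged. Since the edge sets cover $E(G)$, we get
\[
I(G)=I(G_1)+\cdots+I(G_c),\qquad \Ind(G)=\Ind(G_1)\cap\cdots\cap\Ind(G_c).
\]
The proof then has two ingredients.

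First, each co-chordal graph $H$ with at least one edge satisfies $\reg(H)=1$; this is Fröberg's theorem. Concretely, $\Ind(H)$ is the clique complex of the chordal graph $\overline H$. Every induced subcomplex of it is again the clique complex of a chordal graph, and such complexes have vanishing reduced homology in positive degrees (for instance by repeatedly removing simplicial vertices, whose links are simplices, hence cones). So the Leray number of $\Ind(H)$ is at most $1$.

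Second, I will use subadditivity of Leray numbers under intersection (Kalai--Meshulam): $L(K\cap K')\leq L(K)+L(K')$ for simplicial complexes on a common vertex set. Equivalently, $\reg(I+J)\leq\reg(I)+\reg(J)-1$ for squarefree monomial ideals. Iterating this over the $c$ complexes gives $L(\Ind(G))\leq c$, that is, $\reg(G)\leq c=\cd(G)$.

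The main obstacle is the subadditivity step; the rest is bookkeeping with Hochster's formula. If I could not simply cite Kalai--Meshulam, I would prove it by induction on $|W|$ using the Mayer--Vietoris sequence and the nerve of the cover of $K\cap K'$. Alternatively, I would use the spectral sequence of the short exact sequence $0\to R/(I\cap J)\to R/I\oplus R/J\to R/(I+J)\to 0$ together with the bound $\reg(I\cap J)\leq\reg(I)+\reg(J)$ for squarefree monomial ideals. Since the paper attributes the lemma to \citep{MK-im,russ}, it is legitimate to quote these results directly.
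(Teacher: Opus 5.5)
Your proof is correct and is essentially the argument of the sources the paper cites for this lemma; the paper itself gives no proof. The lower bound is Katzman's: Hochster's formula applied to the induced subgraph $kK_2$, whose independence complex is a $(k-1)$-sphere. The upper bound is Woodroofe's: Fr\"oberg's theorem combined with the Kalai--Meshulam subadditivity $\reg(R/(I+J))\leq \reg(R/I)+\reg(R/J)$ for squarefree monomial ideals.

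One slip to fix. By your displayed formula, $\reg(G)$ \emph{equals} the Leray number of $\Ind(G)$; it is not one more than it. Your later steps already use the correct identification: the Leray number of $\Ind(H)$ is at most $1$ for co-chordal $H$, and $L(\Ind(G))\leq c$ gives $\reg(G)\leq c$. So simply delete ``one more than''.
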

	%%%%%%%%%%%%%%%%%%%%%%%%%%%%%%%%%%%%%%%%%%%%%%%%%%%%%%%%%%%%%%%%
\subsection{Domination parameters}
A subset $A$ of vertices is said to \emph{dominate} a set $B$ if $B\subseteq N_G[A]$. The minimum size of a set of vertices dominating a set $B$ is denoted by $\gamma_G(B)$. In particular, a subset $D\subseteq V$ is a \emph{dominating set} of $G$ if $V=N_G[D]$. A dominating set is \emph{minimal} if none of its proper subsets is a dominating set in $G$. The \emph{domination number} $\gamma(G)$ and the \emph{upper domination number} $\Gamma(G)$ are defined to be the minimum and maximum size of minimal dominating sets in $G$ respectively. 
	
Let $R\subseteq V$ be a subset of vertices. Then $R$ is called an \emph{irredundant set} if 
\begin{equation*}
N_G[x]-N_G[R-\{x\}]\neq \emptyset
\end{equation*}
for each $x\in R$. Let $\IR(G)$ denote the maximum
cardinality of an irredundant set in $G$. We note that the inequalities $\alpha(G)\leq \Gamma(G)\leq \IR(G)$ hold for every graph $G$.
	
We remark that the gap between each pair of $\alpha(G)$, $\Gamma(G)$ and $\IR(G)$ could be arbitrarily large. However, on certain graph classes they coincide. Recall that a graph $G$ is said to be a \emph{strongly perfect graph}~\citep{berge1984} if for every subset $S\subseteq V$, there exists an independent set $R\subseteq S$ such that $|R\cap C|=1$ for every maximal complete subgraph of $G[S]$. Observe that the class of strongly perfect graphs include chordal and co-chordal graphs, bipartite graphs, cographs, comparability graphs, permutation graphs (see~\citep[Section $3$]{dombook} for further details).
	
\begin{theorem}\label{thm:dom-eq}
If $G$ is a graph without any isolated vertex, then $\alpha(G)=\Gamma(G)=\IR(G)$ if one of the following holds:
\begin{itemize}
\item[$(1)$] $G$ is a strongly perfect graph~\textnormal{\citep[Theorem $2$]{Cheston1994}},

\item[$(2)$] $\IR(G)+\gamma(G)=|G|$~\textnormal{\citep[Theorem $2$]{Cockayne1981}}.
\end{itemize}
\end{theorem}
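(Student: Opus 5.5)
This statement collects two results from the literature, so the plan is mainly to check that their hypotheses match what is used here. Neither part needs a new combinatorial argument. We already have the chain $\alpha(G)\leq \Gamma(G)\leq \IR(G)$ for every graph, so in each case it suffices to prove $\IR(G)\leq \alpha(G)$.

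For part $(1)$ I would follow Cheston and Fricke. Let $R$ be an irredundant set of maximum size $\IR(G)$. For each $x\in R$, choose a private neighbour $p_x\in N_G[x]\setminus N_G[R\setminus\{x\}]$, and let $S=\{p_x\colon x\in R\}\cup R$. Apply strong perfection to a suitable induced subgraph built from $R$ and the private neighbours. The resulting independent set meets every maximal clique of that subgraph exactly once. The aim is to extract from it an independent set of size at least $|R|$.

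The key observation is that for distinct $x,y\in R$, the vertex $p_x$ is not adjacent to $y$. Moreover, $x$ is adjacent to $p_x$ unless $p_x=x$. So the "private pairs" $\{x,p_x\}$ form edges or singletons. An independent set hitting every maximal clique of $G[S]$ must then pick a vertex from each pair, giving $|R|$ distinct independent vertices.

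The main obstacle is the step where maximal cliques of $G[S]$ are shown to separate the pairs correctly. Cliques among the $p_x$'s could otherwise allow one chosen vertex to cover several pairs. I would therefore first try to choose $S$ more carefully, for instance taking $S$ to be only the private neighbours together with the isolated-in-$R$ vertices, and argue via the clique intersection property. If this fails, I would cite \citep[Theorem $2$]{Cheston1994} directly, as the paper does.

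For part $(2)$, I would cite \citep[Theorem $2$]{Cockayne1981}, and sketch the argument as follows. Assume $\IR(G)+\gamma(G)=|G|$, and take a maximum irredundant set $R$. The complement $V\setminus R$ has size $\gamma(G)$. On the other hand, the private neighbours of $R$ together with $V\setminus N_G[R]$ produce a dominating set inside a set disjoint from a large part of $R$. Equality then forces $R$ to have no internal edges, for otherwise a smaller dominating set could be produced. Hence $R$ is independent and $\IR(G)\leq \alpha(G)$. The no-isolated-vertex hypothesis is used to guarantee that the complement of $R$ is nonempty and dominates.
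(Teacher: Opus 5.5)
The paper gives no argument for this statement, only the two citations, so your sketches have to stand on their own. Part $(1)$ is correct; part $(2)$ contains a false step, though your exchange idea can be redirected to repair it.

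\textbf{Part (1).} Your approach is correct and essentially complete. However, the obstacle you flag is not the real one, and your fallback would break the proof. The pairs $C_x=\{x,p_x\}$ are pairwise disjoint: for $y\neq x$ we have $p_y\notin N_G[x]$, while $x,p_x\in N_G[x]$. So one vertex of $T$ can never serve two pairs, however the $p_x$ are joined among themselves.

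What must be checked is that each $C_x$ is a maximal clique of $G[S]$, where $S=R\cup\{p_x\colon x\in R\}$. This is immediate. Every $y\in R\setminus\{x\}$ is non-adjacent to $p_x$, and every $p_y$ with $y\neq x$ is non-adjacent to $x$. So $x$ and $p_x$ have no common neighbour in $S$, and when $p_x=x$ the vertex $x$ is isolated in $G[S]$. Hence $T$ meets each of the $|R|$ disjoint cliques $C_x$, and $\alpha(G)\geq |T|\geq \IR(G)$.

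Keep $R$ inside $S$. For $C_4$ with $R$ an edge, your alternative $S$ consists only of the two adjacent private neighbours, which gives $|T|=1<|R|$.

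\textbf{Part (2).} Equality does not force a maximum irredundant set to be independent. In $P_4=abcd$ we have $n=4$ and $\gamma=\IR=2$. Yet $R=\{b,c\}$ is a maximum irredundant set (with private neighbours $a$ and $d$) containing the edge $bc$, and no dominating set of size $1$ exists. So the goal must be \emph{some} independent set of size $|R|$, not $R$ itself.

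Your exchange idea works once it is aimed at the private neighbours. The set $D=V\setminus R$ dominates and has size $\gamma(G)$, so it is a minimum dominating set. For every non-isolated vertex $x$ of $G[R]$, the private neighbour $p_x$ lies in $D$.

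Now let $zu$ be an edge of $G[R]$. The set $(D\setminus\{p_z,p_u\})\cup\{u\}$ has size $\gamma(G)-1$ and dominates every vertex except possibly $p_z$. Indeed, each $r\in R\setminus\{z,u\}$ keeps a neighbour in $D\setminus\{p_z,p_u\}$, because $p_z$ and $p_u$ have no neighbours in $R$ other than $z$ and $u$. By minimality, $N_G(p_z)\cap D\subseteq\{p_u\}$ for every neighbour $u$ of $z$ in $R$.

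Consequently $p_z$ and $p_y$ can be adjacent only if $zy$ is an isolated edge of $G[R]$; this does happen, for example in $C_4$. Take the isolated vertices of $G[R]$ together with the $p_x$ for all other $x\in R$. In each adjacent pair $p_z\sim p_y$, replace $p_z$ by $z$. This gives an independent set of size $|R|=\IR(G)$.
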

%%%%%%%%%%%%%%%%%%%%%%%%%%%%%%%%%%%%%%%%%%%%%%%%%%%%%%%%%%%%%%%%%%%%%%%%%%	
\subsection{Topological tools}
To analyze the algebraic properties of hypergraphs, we often rely on a few basic topological concepts. Recall that an \emph{abstract simplicial complex} $\Delta$ on a vertex set $V$ is a collection of subsets of $V$ that is closed under inclusion. The elements of $\Delta$ are its \emph{faces}, and faces which are maximal with respect to the inclusion are \emph{facets} of $\Delta$. The \emph{Alexander dual} of a simplicial complex is the complex defined by
\begin{equation*}
\Delta^{\vee}:=\{A\subseteq V\colon V\setminus A\notin \Delta\}.
\end{equation*}
Recall that a subset $K\subseteq V$ is said to be a minimal non-face of $\D$, if $K$ is itself not a face of $\D$ while any proper subset of it is. The maximum size of minimal non-faces of $\D$ is known as the \emph{Helly number} of $\D$, denoted by $\hf(\D)$. Observe that the inequality $\hf(\D)\leq \reg(\D)+1$ holds, since if $K$ is a minimal non-face in $\D$ of size $\hf(\D)$, then $\D[K]\cong \SA^{\hf(\D)-2}$, where $\SA^m$ is the $m$-dimensional sphere and $\D[K]:=\{A\in \D\colon A\subseteq K\}$ is the subcomplex of $\D$ induced by the set $K$.

Let $\HE$ be a hypergraph on the vertex set $V$. A subset $I\subseteq V$ is \emph{independent} if $e\nsubset I$ for each $e\in \HE$. The family $\Ind(\HE)$ of all independent sets in $\HE$ forms a simplicial complex, the \emph{independence complex} of $\HE$.

The regularity and projective dimension of the edge ring of a hypergraph $\HE$ is closely tied to the topology of $\Ind(\HE)$ via Hochster's classic formula~\citep{stanley-green}. In simple terms, Hochster's formula shows that the graded Betti numbers of the edge ring can be computed from the dimensions of the reduced homology groups of its induced subcomplexes. In other words,
\begin{align*}
&\reg(\HE)=\max \{j\colon \widetilde{H}_{j-1}(\Ind(\HE)[S];\kk)\neq 0\;\textrm{for\;some}\;S\subseteq V\},\\
&\pd(\HE)=\min \{i\colon \widetilde{H}_{|L|-i-l-1}(\Ind(\HE)[L];\kk)=0\;\textrm{for\;all}\;l>0\;\textnormal{and}\;L\subseteq V\},
\end{align*}
where $\widetilde{H}_{*}(-;\kk)$ denotes the (reduced) singular homology.
This useful homological bridge allows us to translate purely algebraic invariants into structural combinatorial properties of hypergraphs~\citep{ha-survey}. Moreover, the notion of the Alexander dual plays a complementary role via Terai's duality~\citep{Terai} on the calculation of the projective dimension of a hypergraph $\HE$ as the equality $\pd(\HE)=\reg(\Ind(\HE)^{\vee})+1$ holds.

We further note that the Helly number $\hf(\Ind(\HE)^{\vee})$ of the Alexander dual of $\Ind(\HE)$ exactly equals to $|V|-i(\HE)$, where $i(\HE)$ is the least cardinality of a maximal independent set of $\HE$, since any facet $F$ of $\Ind(\HE)$ corresponds to a minimal non-face $V\setminus F$ of $\Ind(\HE)^{\vee}$. This in particular implies the following lower bound by Terai's duality~\citep{Terai}.

\begin{corollary}\label{cor:helly}
The inequality $|V|-i(\HE)=\hf(\Ind(\HE)^{\vee})\leq \pd(\HE)$ holds for any hypergraph $\HE$.
\end{corollary}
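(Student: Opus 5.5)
The claim has two parts: the equality $|V|-i(\HE)=\hf(\Ind(\HE)^{\vee})$, and the inequality $\hf(\Ind(\HE)^{\vee})\leq \pd(\HE)$. I would prove them separately, first identifying the minimal non-faces of the Alexander dual and then chaining the Helly bound with Terai's duality.

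For the equality, write $\D=\Ind(\HE)$ and recall that $A\in\D^{\vee}$ if and only if $V\setminus A\notin\D$. Hence $K\notin\D^{\vee}$ exactly when $V\setminus K\in\D$. Moreover, every proper subset of $K$ lies in $\D^{\vee}$ exactly when every proper superset of $V\setminus K$ fails to be in $\D$. It is enough to check this for supersets obtained by adding one vertex, since $\D$ is closed under taking subsets. So the minimal non-faces of $\D^{\vee}$ are precisely the complements $V\setminus F$ of facets $F$ of $\D$, that is, of maximal independent sets of $\HE$. The largest such complement comes from the smallest facet, which gives $\hf(\D^{\vee})=|V|-i(\HE)$.

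For the inequality, I would use the observation already made in the text. If $K$ is a minimal non-face of a complex $\Gamma$ with $|K|=\hf(\Gamma)$, then $\Gamma[K]$ is the boundary of the simplex on $K$, which is homeomorphic to $\SA^{|K|-2}$. Its reduced homology is therefore nonzero in degree $|K|-2$ over any field. By Hochster's formula, with $j=|K|-1$, this gives $\reg(\Gamma)\geq |K|-1=\hf(\Gamma)-1$. Applying this with $\Gamma=\D^{\vee}$ and using Terai's duality $\pd(\HE)=\reg(\D^{\vee})+1$ yields $\hf(\D^{\vee})\leq\reg(\D^{\vee})+1=\pd(\HE)$.

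The only point needing care is a degenerate case: if $|K|=1$, then $\Gamma[K]=\{\emptyset\}$, the empty complex, whose reduced homology is nonzero in degree $-1$. This still matches the convention in Hochster's formula, so the bound holds, and no other step presents a real obstacle.
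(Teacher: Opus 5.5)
Your proof is correct and follows the paper's own argument. The paper likewise obtains $\hf(\Ind(\HE)^{\vee})=|V|-i(\HE)$ by matching minimal non-faces of $\Ind(\HE)^{\vee}$ with complements of facets of $\Ind(\HE)$, and then combines $\hf(\D)\leq\reg(\D)+1$ (from $\D[K]\cong\SA^{\hf(\D)-2}$) with Terai's duality $\pd(\HE)=\reg(\Ind(\HE)^{\vee})+1$; you just verify the facet/non-face correspondence and the case $|K|=1$ more explicitly than the paper does.
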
	
	
In the case of graphs, the following provides an inductive bound on the regularity, which will serve as a cornerstone for establishing exact regularity equalities across aforementioned graph classes. 

\begin{corollary}\textnormal{\cite{DHS-boundpd}}\label{cor:induction-sc}
Let $G$ be a graph and let $v\in V$ be given. Then
\begin{equation*}
\reg (G)\leq \max\{\reg(G-v), \reg(G-N_G[v])+1\}.
\end{equation*}
Moreover, $\reg(G)$ always equals to one of $\reg(G-v)$ or $\reg(G-N_G[v])+1$. 
\end{corollary}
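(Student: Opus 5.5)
The plan is to derive both assertions from the standard short exact sequence attached to a vertex. Write $I=I(G)\subseteq R$ and let $x=x_v$. Multiplication by $x$ gives the exact sequence of graded $R$-modules
\begin{equation*}
0\longrightarrow A:=R/(I:x)(-1)\xrightarrow{\;\cdot x\;} B:=R/I\longrightarrow C:=R/(I,x)\longrightarrow 0 .
\end{equation*}
First I identify the outer terms. One has $(I,x)=I(G-v)+(x)$, and $(I:x)=I(G-N_G[v])+(x_u\colon u\in N_G(v))$. Quotienting by variables that do not occur in the remaining generators does not change regularity. Hence $\reg(C)=\reg(G-v)$ and $\reg(A)=\reg(G-N_G[v])+1$, the shift coming from the twist $(-1)$.

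For the upper bound, the long exact sequence of $\operatorname{Tor}^R_i(-,\kk)$ in each internal degree yields the three standard inequalities
\begin{equation*}
\reg B\le\max\{\reg A,\reg C\},\quad \reg A\le \max\{\reg B,\reg C+1\},\quad \reg C\le\max\{\reg A-1,\reg B\}.
\end{equation*}
The first of these is exactly the claimed bound $\reg(G)\le\max\{\reg(G-v),\reg(G-N_G[v])+1\}$.

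For the \emph{moreover} part I add one further ingredient. Since $G-v$ is an induced subgraph of $G$, Hochster's formula gives $\reg(C)\le\reg(B)$, because the induced subcomplexes of $\Ind(G-v)$ are induced subcomplexes of $\Ind(G)$. I then split into cases according to how $\reg A$ compares with $\reg C$.
\begin{enumerate}
\item If $\reg A\le\reg C$, the first inequality gives $\reg B\le\reg C$. Together with $\reg C\le\reg B$ this yields $\reg(G)=\reg(G-v)$.
\item If $\reg A\ge\reg C+2$, the second inequality forces $\reg A\le\reg B$, since $\reg A$ strictly exceeds $\reg C+1$. Combined with $\reg B\le\max\{\reg A,\reg C\}=\reg A$, this gives $\reg(G)=\reg(G-N_G[v])+1$.
\item If $\reg A=\reg C+1$, then $\reg C\le\reg B\le\reg C+1$. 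So $\reg B$ equals either $\reg C=\reg(G-v)$ or $\reg C+1=\reg A=\reg(G-N_G[v])+1$.
\end{enumerate}
In every case $\reg(G)$ coincides with one of the two quantities, as claimed.

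The only delicate point is the identification of the colon ideal $(I:x)$ and the verification that the extra linear generators do not affect regularity. The latter holds because those variables form a regular sequence on the quotient by the remaining generators, which involve disjoint variables. Everything else is routine bookkeeping with the Tor long exact sequence.
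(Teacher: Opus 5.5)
Your proof is correct. The paper gives no argument of its own here; it only cites Dao--Huneke--Schweig, and your proof is the standard one behind that result: the multiplication-by-$x_v$ short exact sequence, with $(I:x_v)=I(G-N_G[v])+(x_u\colon u\in N_G(v))$ and $(I,x_v)=I(G-v)+(x_v)$, plus the monotonicity $\reg(G-v)\le\reg(G)$ under induced subgraphs to settle the ``moreover'' part. Your three-case analysis, including the integrality step when $\reg A=\reg C+1$, is complete.
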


%%%%%%%%%%%%%%%%%%%%%%%%%%%%%%%%%%%%%%%%%%%%%%%%%%%%%%%%%%%%%%%%%%%%%%%%%%
\section{Proof of Theorem~\ref{thm:thm1} and examples}\label{sec:pd_reg}
In this section, we prove Theorem~\ref{thm:thm1}, and provide examples to analyze its tightness with respect to the related graph parameters. 	
	
Let $G=(V,E)$ be a graph without any isolated vertex. Observe that $\NE[G]$ is not always a simple hypergraph. However, we may turn it into a simple hypergraph without altering its regularity and projective dimension.
\begin{definition}
The \emph{reduced closed neighborhood hypergraph} $\NE_{\red}[G]$ of $G$ is defined to be the hypergraph on $V$ whose edges are precisely the inclusion-minimal members of $\{N_G[x]\colon x\in V\}$. 
\end{definition}
	
\begin{fact}\label{fact:remove}
Let $\HE$ be a hypergraph. If $e,f\in \HE$ are two edges with $e\subseteq f$, then $\reg(\HE)=\reg(\HE-f)$ and $\pd(\HE)=\pd(\HE-f)$.
\end{fact}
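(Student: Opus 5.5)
The cleanest route is to observe that removing a non-minimal edge does not change the edge ideal. The ideal $I(\HE)$ is generated by the monomials $x_g$ for $g\in\HE$. If $e\subseteq f$ with $e,f\in\HE$ and $e\neq f$, then $x_e$ divides $x_f$, so $x_f\in(x_e)\subseteq I(\HE-f)$. Hence $I(\HE)=I(\HE-f)$ as ideals of $R$, and so the modules $R/I(\HE)$ and $R/I(\HE-f)$ coincide.

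From this, their minimal graded free resolutions, and hence all graded Betti numbers $\beta_{i,j}$, agree. The definitions of $\reg$ and $\pd$ depend only on these numbers, which gives both equalities immediately.

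As an alternative, one could argue combinatorially via Hochster's formula. The independence complexes agree, $\Ind(\HE)=\Ind(\HE-f)$: if $f\subseteq I$ then $e\subseteq I$, so a set containing no edge of $\HE-f$ contains no edge of $\HE$ either. The homological formulas for $\reg$ and $\pd$ then coincide term by term. I would present the ideal-equality argument as the main proof and mention this as a remark.

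There is essentially no obstacle here. The only point needing care is the degenerate case $e=f$, where $f$ is listed twice; there $\HE-f$ is understood to remove one copy, and again nothing changes.
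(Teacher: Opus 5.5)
Your proof is correct. Since $x_e$ divides $x_f$, the edge ideals $I(\HE)$ and $I(\HE-f)$ coincide, so all graded Betti numbers agree, and hence so do $\reg$ and $\pd$. The paper states this as a Fact without writing out a proof, and your ideal-equality argument (or equivalently $\Ind(\HE)=\Ind(\HE-f)$ combined with Hochster's formula) is the standard justification it takes for granted.
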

	
We remark that the equality $\pd(\NE[G])=\pd(\NE_{\red}[G])$ holds for every graph $G$ by Fact~\ref{fact:remove}. Furthermore, $\NE_{\red}[G]$ is a simple hypergraph without any isolated vertex.
	
A graph $G=(V,E)$ is said to be a \emph{closed neighborhood Sperner graph} (a $\CNS$-graph for short) if there exist no two vertices whose closed neighborhood sets are comparable. Notice that if G is a CNS-graph, then $\NE_{\red}[G]=\NE[G]$.
	
\begin{fact}\label{fact:inci-bip}
$\IN(\NE[G])\cong \BE_e(G)$ for every graph $G$.
\end{fact}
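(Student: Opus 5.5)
We prove Fact~\ref{fact:inci-bip} by writing down an explicit bijection between the vertex sets of the two graphs and checking that it carries edges to edges. Write $V=\{v_1,\ldots,v_n\}$. By definition, $\IN(\NE[G])$ is the bipartite graph on $V\cup \NE[G]$ in which a vertex $v_i$ is adjacent to an edge $N_G[v_j]$ exactly when $v_i\in N_G[v_j]$. On the other side, $\BE_e(G)$ is the bipartite graph on $V\cup V'$ in which $v_i$ is adjacent to $v'_j$ exactly when $i=j$ or $v_iv_j\in E$.

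The map we use is the identity on $V$ together with $\varphi(N_G[v_j])=v'_j$. To make this a bijection, $\NE[G]$ must be read as the indexed family $(N_G[v_j])_{j=1}^n$, so that it has exactly $n$ members even when two vertices have equal closed neighborhoods. This convention matches the definition $\NE[G]=\{N_G[x]\colon x\in V\}$ with one edge per vertex $x$. It is the only point that needs care. If $\NE[G]$ were read as a set, then for non-$\CNS$ graphs with twins the statement holds only up to identifying the duplicated vertices of $V'$. That identification has no effect on the later use of the fact, in view of Fact~\ref{fact:remove}.

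With this map, adjacency is preserved in both directions: $v_i\in N_G[v_j]$ holds if and only if $v_i=v_j$ or $v_iv_j\in E$, which is exactly the adjacency rule $i=j$ or $v_iv_j\in E$ for $v_i$ and $v'_j$ in $\BE_e(G)$. Both graphs are bipartite with these bipartitions, so there are no other edges to compare. Hence $\varphi$ is a graph isomorphism and $\IN(\NE[G])\cong\BE_e(G)$.

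There is no real obstacle here. The only subtlety is the multiset versus set reading of $\NE[G]$ when $G$ has vertices with equal closed neighborhoods, which the indexed-family convention above resolves.
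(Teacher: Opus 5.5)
Your argument is correct and is exactly the definitional check the paper leaves implicit, since the statement is recorded as a Fact without proof: the identity on $V$ together with $N_G[v_j]\mapsto v'_j$ is an isomorphism because $v_i\in N_G[v_j]$ holds exactly when $i=j$ or $v_iv_j\in E$. You are also right that $\NE[G]$ must be read as the indexed family $(N_G[x])_{x\in V}$, since under the set reading the statement already fails for $G=K_2$; in your aside, though, Fact~\ref{fact:remove} only covers the hypergraph side, while on the bipartite side what is needed is that the set-version incidence graph is the induced subgraph of $\BE_e(G)$ obtained by deleting the duplicated vertices of $V'$.
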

	
Throughout the rest of our work, we abbreviate $\IN(\NE_{\red}[G])$ to $\BE_{red}(G)$, and note that $\BE_{\red}(G)$ is an induced subgraph of $\BE_e(G)$ for every graph $G$, whereas they coincide for $\CNS$-graphs. 
	
\begin{theorem}\label{thm:ineqs} The inequalities
\begin{displaymath}
\alpha(G)\leq \Gamma(G)\leq \pd(\NE[G])\leq \reg(\BE_{\red}(G))\leq \reg(\BE_e(G))\leq \cd(\BE_e(G))
\end{displaymath}
hold for every graph $G$.
\end{theorem}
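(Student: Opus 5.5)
The chain of Theorem~\ref{thm:ineqs} splits into five inequalities, and I will prove them one at a time, from the outside in. The first, $\alpha(G)\leq \Gamma(G)$, is the standard fact already recalled before Theorem~\ref{thm:dom-eq}: a maximal independent set is a minimal dominating set. The last, $\reg(\BE_e(G))\leq \cd(\BE_e(G))$, is the upper bound of Lemma~\ref{lem:cochord}. The fourth, $\reg(\BE_{\red}(G))\leq \reg(\BE_e(G))$, holds because $\BE_{\red}(G)$ is an induced subgraph of $\BE_e(G)$. By Hochster's formula, every induced subcomplex of $\Ind(\BE_{\red}(G))$ is an induced subcomplex of $\Ind(\BE_e(G))$, so regularity cannot increase when passing to induced subgraphs.

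For $\Gamma(G)\leq \pd(\NE[G])$ I will use Corollary~\ref{cor:helly}. The key observation is that a set $I\subseteq V$ is independent in $\NE[G]$ exactly when it contains no closed neighborhood $N_G[x]$. Equivalently, its complement $V\setminus I$ meets every $N_G[x]$, which means $V\setminus I$ is a dominating set of $G$. Hence maximal independent sets of $\NE[G]$ are precisely the complements of minimal dominating sets. It follows that $i(\NE[G])=|V|-\Gamma(G)$, and Corollary~\ref{cor:helly} then gives $\Gamma(G)=|V|-i(\NE[G])\leq \pd(\NE[G])$.

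The main inequality is $\pd(\NE[G])\leq \reg(\BE_{\red}(G))$. By Fact~\ref{fact:remove} we have $\pd(\NE[G])=\pd(\NE_{\red}[G])$. Write $\HE=\NE_{\red}[G]$; this is a simple hypergraph without isolated vertices. By Terai's duality, $\pd(\HE)=\reg(\Ind(\HE)^{\vee})+1$. The facets of $\D=\Ind(\HE)^{\vee}$ are the complements $V\setminus e$ for $e\in\HE$, since minimal non-faces of $\Ind(\HE)$ are exactly the edges of $\HE$ by simplicity. The plan is to invoke the result of B{\i}y{\i}ko{\u{g}}lu et al.~\cite{yusuf} (\citep[Theorem~$3$]{yusuf}), which bounds $\reg(\D)$ above by the regularity of the independence complex of the bipartite complement $B(\D)$ of the facet-vertex bipartite graph.

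In the facet-vertex graph of $\D$, a vertex $v$ is joined to the facet $V\setminus e$ when $v\notin e$. Its bipartite complement therefore joins $v$ to $e$ exactly when $v\in e$, so $B(\D)=\IN(\HE)=\BE_{\red}(G)$. The one thing to check carefully is the normalization in \citep[Theorem~3]{yusuf}. I expect it to read $\reg(\D)\leq \reg(B(\D))-1$ in the present conventions, where $\reg(\D)$ is the simplicial regularity, i.e.\ the Stanley--Reisner regularity. With that normalization, adding the $+1$ from Terai's duality yields exactly $\pd(\HE)\leq \reg(\BE_{\red}(G))$.

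This index bookkeeping is the main obstacle, so I will sanity-check it on $G=K_2$. There $\HE=\{V\}$ and $\pd=1$. The dual $\D$ is the boundary of the $1$-simplex, two points, so its regularity is $0$. Meanwhile $\BE_{\red}(G)$ is $K_{1,2}$, which has regularity $1$. This is consistent with $\reg(\D)\leq\reg(B(\D))-1$. If the cited theorem is instead stated through the independence complex of $B(\D)$, I will translate it using Hochster's formula.
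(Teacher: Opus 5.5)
Your proof is the paper's proof, step for step: $\alpha\le\Gamma$ is standard, $i(\NE[G])=|G|-\Gamma(G)$ is combined with Corollary~\ref{cor:helly}, \citep[Theorem~$3$]{yusuf} is combined with Terai's duality and the identification $B(\Ind(\NE_{\red}[G])^{\vee})=\IN(\NE_{\red}[G])=\BE_{\red}(G)$, then come induced subgraphs and Lemma~\ref{lem:cochord}, and the shifted form $\reg(\D)+1\leq\reg(B(\D))$ you insist on is exactly what the citation must supply. The only slip is in your $K_2$ check: there $\Ind(\HE)^{\vee}=\{\emptyset\}$ (its unique facet is $V\setminus V=\emptyset$), not two points, and two points would have Stanley--Reisner regularity $1$, so your numbers agree only because two errors cancel; a cleaner confirmation of the normalization is the suspension equivalence $\Ind(\IN(\HE)[S\cup\HE])\simeq\Sigma(\D[S])$ for $\emptyset\neq S\subseteq V$ (the bipartite result of \citep{jonsson} already used in Section~\ref{sec:pd_reg}), which shifts homology up one degree and gives $\reg(\D)+1\leq\reg(\IN(\HE))$ directly.
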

\begin{proof} As we have already mentioned, the first inequality is satisfied by any graph. Moreover, observe that the equality $i(\NE[G])=|G|-\Gamma(G)$ holds for every graph $G$. Therefore, we conclude that $\Gamma(G)=|G|-i(\NE[G])\leq \pd(\NE[G])$ by Corollary~\ref{cor:helly}.
		
Taking the equality $\pd(\NE[G])=\pd(\NE_{\red}[G])$ into account, the third inequality can be easily derived from~\citep[Theorem $3$]{yusuf} together with Fact~\ref{fact:inci-bip} and Terai's duality~\citep{Terai}. The fourth is due to the fact that $\BE_{\red}(G)$ is an induced subgraph of $\BE_e(G)$, and the last one follows from Lemma~\ref{lem:cochord}.
\end{proof}

We point out that the proof of Theorem~\ref{thm:ineqs} concludes the proof of Theorem~\ref{thm:thm1}. 

\begin{proposition}\label{prop:im-double}
$\im(\BE_e(G))=\IR(G)$ for every graph $G$.
\end{proposition}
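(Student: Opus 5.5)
We prove the equality by a direct bijection between induced matchings of $\BE_e(G)$ and irredundant sets of $G$ with private neighbours, establishing the two inequalities separately. The translation we use throughout: in $\BE_e(G)$ a vertex $v_i$ is adjacent to $v'_j$ if and only if $v_j\in N_G[v_i]$, equivalently $v_i\in N_G[v_j]$.

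For $\IR(G)\leq \im(\BE_e(G))$, let $R=\{x_1,\ldots,x_k\}$ be an irredundant set of size $\IR(G)$. For each $x_t$ choose a private neighbour $y_t\in N_G[x_t]\setminus N_G[R\setminus\{x_t\}]$. Consider the edges $x_t y'_t$ of $\BE_e(G)$; they exist because $y_t\in N_G[x_t]$. The $y_t$ are distinct, since $y_t$ is dominated by $x_t$ only within $R$, and the $x_t$ are distinct. Hence these edges form a matching. It is induced because the only possible extra edges lie between the two sides, namely $x_s y'_t$ with $s\neq t$. Such an edge would mean $y_t\in N_G[x_s]$, contradicting privacy. (Vertices on the same side are never adjacent, since the graph is bipartite.) Hence $\im(\BE_e(G))\geq k$.

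For the reverse inequality, take an induced matching $\{a_1b'_1,\ldots,a_kb'_k\}$ of $\BE_e(G)$ of maximum size. Every edge of $\BE_e(G)$ joins $V$ to $V'$, so we may write it as $a_t\in V$ and $b'_t\in V'$. Inducedness says $b_t\in N_G[a_s]$ if and only if $s=t$. In particular the $a_t$ are pairwise distinct, as are the $b_t$. Thus $R=\{a_1,\ldots,a_k\}$ satisfies $b_t\in N_G[a_t]\setminus N_G[R\setminus\{a_t\}]$ for each $t$, so $R$ is irredundant of size $k$, giving $\IR(G)\geq \im(\BE_e(G))$.

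The only step that needs care is confirming that distinct matching edges give distinct vertices $a_t$ and $b_t$, and that they correspond exactly to distinct elements of $R$ together with distinct private neighbours. This is immediate from the definition of a matching together with the observation that $v_i$ and $v'_i$ are distinct vertices of $\BE_e(G)$. Note that a private neighbour may be $x_t$ itself; this corresponds to the edge $v_iv'_i$, which is included in $\BE_e(G)$ precisely because of the "$i=j$" clause in its definition. This is exactly why the extended double cover, rather than the ordinary bipartite double cover, is the right object here.
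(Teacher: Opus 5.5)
Your proof is correct and follows the same route as the paper. Private neighbours of a maximum irredundant set give the induced matching $\{x_ty'_t\}$, and conversely the $V$-endpoints of an induced matching form an irredundant set whose private neighbours are the corresponding $V'$-endpoints; you also spell out details the paper leaves implicit, namely that the private neighbours $y_t$ are distinct and that no edge $x_sy'_t$ with $s\neq t$ can occur.
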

\begin{proof}
Let $U=\{u_1,\ldots,u_k\}$ be an irredundant set of maximum size in $G$. 
Therefore, there exists a vertex $v_j\in N_G[u_j]-N_G[U-\{u_j\}]$ for each $1\leq j\leq k$. However, this implies that the set $M(U):=\{u_1v'_1,\ldots,u_kv'_k\}$ is an induced matching in $\BE_e(G)$ so that $\im(\BE_e(G))\geq \IR(G)$. Conversely, suppose that $M=\{x_1y'_1,\ldots,x_py'_p\}$ be an induced matching in $\BE_e(G)$. Notice first that the vertices in $X(M):=\{x_1,\ldots,x_p\}$ must all be distinct. Since $y_i\in N_G[x_i]-N_G[X(M)-\{x_i\}]\neq \emptyset$ for each $1\leq i\leq p$, the set $X(M)$ is an irredundant set in $G$. This proves that $\IR(G)\geq \im(\BE_e(G))$.
\end{proof}
	
By way of examples, we next show that all inequalities of Theorem~\ref{thm:ineqs} could be strict.
\begin{example}
If we consider the $5$-cycle graph $C_5$, we note that
\begin{displaymath}
\alpha(C_5)=\Gamma(C_5)=\IR(C_5)=2<3=\pd(\NE[C_5])=\reg(\BE_e(C_5)),    
\end{displaymath}
where we computed the projective dimension and the regularity in Macaulay$2$~\citep{M2} (over $\Q$).
\end{example}
	
\begin{example}\label{example:gn}
Let $n\geq 3$ be an integer and let $G_n$ be the graph on $V(G_n)=A_n\cup B_n\cup \{u,v\}$, where $A_n=\{a_1,\ldots,a_n\}$ and $B_n=\{b_1,\ldots,b_n\}$ such that 
\begin{align*}
E(G_n):=&\{a_ia_j\colon 1\leq i<j\leq n\}\cup \{a_iu\colon 1\leq i\leq n\}\\
&\cup \{b_ib_j\colon 1\leq i<j\leq n\}\cup \{b_iv\colon 1\leq i\leq n\}\\
&\cup \{a_ib_i\colon 1\leq i\leq n\}.
\end{align*}
\begin{figure}[ht]
\centering     %%% not \center
\begin{tikzpicture}[scale=0.7]
\node [nod2] at (0,2) (u1) [label=left:$u$]{};
\node [nod2] at (2,0) (u2) [label=below:$a_1$]{}
			edge [] (u1);      
\node [nod2] at (2,4) (u3) [label=above:$a_2$]{} 
			edge [] (u1)
			edge [] (u2);
\node [nod2] at (4,2) (u4) [label=above:$a_3$]{} 
			edge [] (u1)
			edge [] (u2)
			edge [] (u3);
\node [nod2] at (6,2) (v4) [label=above:$b_3$]{} 
			edge [] (u4);
\node [nod2] at (8,0) (v2) [label=below:$b_1$]{} 
			edge [] (v4)
			edge [] (u2);   
\node [nod2] at (8,4) (v3) [label=above:$b_2$]{} 
			edge [] (v4)
			edge [] (v2)
			edge [] (u3);
\node [nod2] at (10,2) (v1) [label=right:$v$]{} 
			edge [] (v2)
			edge [] (v3)
			edge [] (v4);  
\end{tikzpicture}
\caption{The graph $G_3$ in Example~\ref{example:gn}.}
\label{fig:gn}
\end{figure}\medskip
It then follows that $\IR(G_n)=n>2=\Gamma(G_n)$. Indeed, either of the sets $A_n$ and $B_n$ would be a maximum irredundant set in $G_n$. We next define
a co-chordal covering $\{B_1,\ldots,B_n\}$ of $\BE_e(G)$. We let $B_1$ and $B_2$ be the subgraphs of $\BE_e(G_n)$ defined by
\begin{align*}
&E(B_1):=E(\BE_e(G_n[A_n\cup \{u\}]))\cup \{a_1b_1', b_1a_1'\}\quad\textnormal{and}\\
&E(B_2):=E(\BE_e(G_n[B_n\cup \{v\}]))\cup \{b_2a_2', a_2b_2'\},    
\end{align*}
and we set $B_j:=\BE_e(G_n)[\{a_j,a_j',b_j,b_j'\}]$ for each $3\leq j\leq n$. Since $\im(\BE_e(G_n))=\IR(G_n)=\cd(\BE_e(G_n))=n$, we conclude that $\reg(\BE_e(G_n))=n$.\medskip
		
On the other hand, we claim that $\pd(\NE[G_n])=\reg(\BE_{\red}(G_n))=2$ for every $n\geq 3$. Indeed, since the graph $\BE_{\red}(G_n)$ is the disjoint union of two copies of the star graph $K_{1,n+1}$, we conclude that
$\Gamma(G_n)=2\leq \pd(\NE[G])\leq \reg(\BE_{\red}(G_n))=2$ from which the claim follows.
\end{example}
	
\begin{example}\label{example:reg-ir}
Consider the cubic graph depicted in Figure~\ref{fig:reg-ir}. The relevant domination parameters of $H$ were determined by Cockayne and Mynhardt~\citep{Cockayne1997} as $\gamma(H)=5<\alpha(H)=\Gamma(H)=6<7=\IR(H)$. Observe that $H$ is a $\CNS$-graph so that
$\BE_{\red}(H)=\BE_e(H)$. It turns out that $\pd(\NE[H])=6<7=\IR(H)=\reg(\BE_e(H))$ via Macaulay$2$~\citep{M2} (over $\Q$). 
		
\begin{figure}[ht]
\centering     %%% not \center
\begin{tikzpicture}[scale=0.6]
\node [nod2] at (0,0) (u1) []{};
\node [nod2] at (2,0) (u2) []{}
			edge [] (u1);
\node [nod2] at (4,0) (u3) []{}
			edge [] (u2);                 
\node [nod2] at (6,0) (u4) []{}
			edge [] (u3);
\node [nod2] at (8,0) (u5) []{}
			edge [] (u4);           
\node [nod2] at (10,0) (u6) []{}
			edge [] (u5)
			edge [bend left] (u1);  
\node [nod2] at (1,2) (z1) []{}
			edge [] (u1)
			edge [] (u2);
\node [nod2] at (5,2) (z2) []{}
			edge [] (u3)
			edge [] (u4);                                
\node [nod2] at (9,2) (z3) []{}
			edge [] (u5)
			edge [] (u6);
\node [nod2] at (1,4) (v1) []{}
			edge [] (z1);
\node [nod2] at (5,4) (v2) []{}
			edge [] (z2);
\node [nod2] at (9,4) (v3) []{}
			edge [] (z3);
				
\node [nod2] at (0,6) (w1) []{}
			edge [] (v1);
\node [nod2] at (2,6) (w2) []{}
			edge [] (v1)
			edge [] (w1);
\node [nod2] at (4,6) (w3) []{}
			edge [] (v2)
			edge [] (w2);
\node [nod2] at (6,6) (w4) []{}
			edge [] (v2)
			edge [] (w3);
\node [nod2] at (8,6) (w5) []{}
			edge [] (v3)
			edge [] (w4);
\node [nod2] at (10,6) (w6) []{}
			edge [] (v3)
			edge [] (w5)
			edge [bend right] (w1);
				
\end{tikzpicture}
\caption{The graph $H$ in Example~\ref{example:reg-ir}.}
\label{fig:reg-ir}
\end{figure}\medskip
\end{example}
	
We now discuss about the gap between the Castelnuovo-Mumford regularity and the co-chordal covering number of the extended double cover by demonstrating that it can be arbitrarily large. For a given graph $G$, any edge of $\BE_e(G)$ of the form $v_iv'_i$ is said to be a \emph{diagonal edge}, whereas an edge of the form $v_iv'_j$ for some $i\neq j$ is called a \emph{cross-edge} of $\BE_e(G)$.

\begin{definition}
Let $G=(V,E)$ be a graph. The \emph{conflict-graph} $\CE(G)$ of $G$ is defined to be the graph on the set $\{(u,v)\colon u,v\in V\;\textnormal{and}\;u\in N_G[v]\}$ of ordered pairs such that $(u,v)$ and $(x,y)$ form an edge if and only if $u\notin N_G[y]$ and $x\notin N_G[v]$.
\end{definition}

\begin{proposition}\label{prop:chi-cochord}
$\chi(\overline{G})\leq \chi(\CE(G))\leq \cd(\BE_e(G))$ for every graph $G$.
\end{proposition}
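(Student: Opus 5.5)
Both inequalities come from explicit maps. For the first, $\overline{G}$ sits inside $\CE(G)$ as an induced subgraph. For the second, a co-chordal covering of $\BE_e(G)$ yields a proper coloring of $\CE(G)$.

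\emph{First inequality.} Map each vertex $v\in V$ to the diagonal pair $(v,v)$, which is a vertex of $\CE(G)$ because $v\in N_G[v]$. By definition, $(u,u)$ and $(v,v)$ are adjacent in $\CE(G)$ if and only if $u\notin N_G[v]$ and $v\notin N_G[u]$. For $u\neq v$ this says exactly that $uv\notin E$, i.e. $uv\in E(\overline{G})$. So the diagonal pairs induce a copy of $\overline{G}$ in $\CE(G)$, and monotonicity of $\chi$ under taking subgraphs gives $\chi(\overline{G})\le\chi(\CE(G))$.

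\emph{Second inequality.} Identify a vertex $(u,v)$ of $\CE(G)$ with the edge $uv'$ of $\BE_e(G)$. The condition $u\in N_G[v]$ is precisely the adjacency rule of $\BE_e(G)$, so this is a bijection between $V(\CE(G))$ and $E(\BE_e(G))$.

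Next, translate adjacency in $\CE(G)$. Two pairs $(u,v),(x,y)$ are adjacent iff $u\notin N_G[y]$ and $x\notin N_G[v]$, i.e. iff $uy'$ and $xv'$ are both non-edges of $\BE_e(G)$. This forces $u\ne x$ and $v\ne y$: if $u=x$, then $x\in N_G[v]$ already holds, and symmetrically if $v=y$. Hence adjacency in $\CE(G)$ means exactly that the edges $uv'$ and $xy'$ form an induced $2K_2$ (an induced matching of size two) in $\BE_e(G)$.

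Now take a co-chordal covering $\{H_1,\dots,H_k\}$ of $\BE_e(G)$ with $k=\cd(\BE_e(G))$, and color each edge by the index of some $H_i$ containing it. Suppose two adjacent vertices $(u,v),(x,y)$ of $\CE(G)$ received the same color $i$. Then $uv',xy'\in E(H_i)$, while $uy'$ and $xv'$ are not edges of $\BE_e(G)$, hence not of $H_i$. Also $ux$ and $v'y'$ are not edges of $H_i$, since $H_i$ is a subgraph of a bipartite graph. So the four vertices $u,v',x,y'$ induce $2K_2$ in $H_i$. In the complement this is an induced $C_4$, contradicting that $\overline{H_i}$ is chordal. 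Therefore the coloring is proper, and $\chi(\CE(G))\le k$.

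The step needing most care is this last use of co-chordality. The point is that $H_i$ need not be an induced subgraph of $\BE_e(G)$, but the non-edges we need are inherited anyway: $uy'$ and $xv'$ are non-edges of $\BE_e(G)$ itself, and $ux$, $v'y'$ lie within one side of the bipartition. The key fact used is the standard one that a co-chordal graph contains no induced $2K_2$, since its complement would contain an induced $C_4$.
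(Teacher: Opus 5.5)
Your proof is correct and follows essentially the same route as the paper: the diagonal embedding $v\mapsto(v,v)$ gives the first inequality, and for the second you color $(x,y)$ by the index of a covering subgraph containing $xy'$, using that adjacency in $\CE(G)$ means $uv'$ and $xy'$ form an induced $2K_2$ in $\BE_e(G)$. You also make explicit two points the paper leaves implicit: that adjacency forces $u\neq x$ and $v\neq y$, and that the induced $2K_2$ survives in a possibly non-induced subgraph $H_i$.
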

\begin{proof}
The map $v\to (v,v)$ is an isomorphism of $\overline{G}$ onto a subgraph of $\CE(G)$ induced by diagonal pairs $(v,v)$ for $v\in V$. Thus, $\chi(\overline{G})\leq \chi(\CE(G))$.

For the second inequality, let $\RE=\{R_1,\ldots,R_k\}$ be a co-chordal covering of $\BE_e(G)$. If we define $\kappa\colon V(\CE(G))\to [k]$ by $\kappa((x,y)):=\min \{j\colon xy'\in R_j\}$, we claim that $\kappa$ is a proper coloring of $\CE(G)$; hence, $\chi(\CE(G))\leq \cd(\BE_e(G))$.

Suppose that $((u,v),(x,y))\in E(\CE(G))$ is an edge. Then by the definition, we have that $uy',xv'\notin E(\BE_e(G))$. It then follows that cross-edges $uv'$ and $xy'$ witness an induced $2K_2$ in $\BE_e(G)$. Therefore, no subgraph from the family $\RE$ can contain both edges $uv'$ and $xy'$ at the same time. In other words, $\kappa((u,v))\neq \kappa((x,y))$ as claimed.
\end{proof}
%%%%%%%%%%%%%%%%%%%%%%%%%%%%%%%%%%%%%%%%%%%%%%%%%%%%%%%%%%%%%%%%%%%%%%%%%%

\begin{theorem}
For every integer $k\geq 3$, there exists a graph $G_k$ with $\reg(\BE_e(G_k))\leq 3$ and $\cd(\BE_e(G_k))\geq k$.
\end{theorem}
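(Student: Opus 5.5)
The plan is to get the lower bound on $\cd(\BE_e(G_k))$ from Proposition~\ref{prop:chi-cochord} and the upper bound on the regularity by peeling off vertices with Corollary~\ref{cor:induction-sc}. Since $\chi(\overline{G})\leq \cd(\BE_e(G))$, it is enough to find graphs $G_k$ whose complements have chromatic number at least $k$ while $\reg(\BE_e(G_k))$ stays at most $3$.

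\textbf{Choice of $G_k$.} The natural candidate is $G_k=\overline{H_k}$, where $H_k$ is a triangle-free graph with $\chi(H_k)\geq k$. The iterated Mycielski graph is the first choice; a shift graph or a Kneser-type graph is the fallback if extra structure is needed. Then $\chi(\overline{G_k})=\chi(H_k)\geq k$, so $\cd(\BE_e(G_k))\geq k$ at once. Triangle-freeness of $H_k$ means that every independent set of $H_k$ is a clique of $G_k$. In particular, for every vertex $x$ the set $V\setminus N_{G_k}[x]=N_{H_k}(x)$ is a clique of $G_k$, and $\alpha(G_k)\leq 2$. So the small-regularity side should come from $G_k$ being very dense.

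\textbf{Bounding the regularity.} I would apply Corollary~\ref{cor:induction-sc} to $B=\BE_e(G_k)$ at an unprimed vertex $x$. The branch $B-x$ is handled by induction on the number of unprimed vertices, using that the bound is monotone on induced subgraphs. The other branch $B-N_B[x]$ is the bipartite graph between $X=V\setminus\{x\}$ and $C'$, where $C=N_{H_k}(x)$ is a clique of $G_k$.
\begin{itemize}
\item In that branch every $c\in C\cap X$ is adjacent to all of $C'$. Removing $N[c]$ therefore leaves no primed vertices, which contributes at most $1$.
\item Deleting these $c$ one by one reduces the problem to the bipartite graph $D_x$ between $V\setminus(C\cup\{x\})$ and $C'$. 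In $D_x$, a pair $v,c'$ is nonadjacent exactly when $vc\in E(H_k)$.
\end{itemize}
The target is $\reg(D_x)\leq 2$, which gives $\reg(B)\leq 3$ overall. Separately, Proposition~\ref{prop:im-double} gives $\im(B)=\IR(G_k)$, and I would check that this stays bounded as a consistency test.

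\textbf{Main obstacle.} The hard step is bounding $\reg(D_x)$. An induced matching $v_1c_1',\dots,v_tc_t'$ in $D_x$ only requires $v_ic_j\in E(H_k)$ for $i\neq j$. This is a bipartite pattern, so triangle-freeness alone does not forbid it, and the naive Mycielski choice may fail here. To handle this I would strengthen the choice of $H_k$ by requiring in addition that $H_k$ contains no induced "crown" $K_{3,3}$ minus a perfect matching between a neighborhood $N_{H_k}(x)$ and its complement. Shift graphs are a candidate: they are triangle-free with $\chi\to\infty$, and their neighborhoods are highly structured. With this extra property, a second application of Corollary~\ref{cor:induction-sc} inside $D_x$, at a primed vertex $c'$, should leave a co-chordal remainder (for instance, a bipartite graph without induced $2K_2$). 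Lemma~\ref{lem:cochord} then bounds its regularity by $1$, giving $\reg(D_x)\leq 2$.
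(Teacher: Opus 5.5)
The reduction itself is sound. With $H_k$ triangle-free, $C=N_{H_k}(x)$ is a clique of $G_k$, and its unprimed copies are complete to $C'$. The induction has to run over all induced subgraphs $\BE_e(G_k)[A\cup B']$, since $B-x$ is no longer an extended double cover, but that is routine. Everything then hinges on $\reg(D_x)\le 2$, and \emph{that step is not proved}.

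The hypothesis you propose is triangle-freeness plus no induced crown between $N_{H_k}(x)$ and its complement. This only forces $\im(D_x)\le 2$, which does not bound regularity. For example, let $x$ be adjacent to $c_1,\dots,c_4$, and let $v_1,\dots,v_4\notin N_{H_k}[x]$ satisfy $N_{H_k}(v_i)\cap C=\{c_{i+2},c_{i+3}\}$ (indices mod $4$), with no further edges. This configuration is triangle-free and contains no such crown for any vertex. Yet $v_1c_2'v_2c_3'v_3c_4'v_4c_1'$ is an induced $8$-cycle in $D_x$, so $\reg(D_x)\ge\reg(C_8)=3$. So the target $\reg(D_x)\le 2$ is simply false under your hypothesis, and no choice of primed vertex can give the co-chordal remainders you hope for. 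The claim that shift graphs have the needed property is likewise asserted, not checked.

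The missing idea is to exclude $4$-cycles: take $H_k$ of girth at least $5$ with $\chi(H_k)\ge k$ (such graphs exist by Erd\H{o}s), which is exactly the paper's choice. The bad configuration above is full of $4$-cycles $x\,c_{i+2}\,v_i\,c_{i+3}$. With girth at least $5$, each vertex outside $N_{H_k}[x]$ has at most one $H_k$-neighbour in $C$. So for $c\in C$, the graph $D_x-N[c']$ is complete bipartite between $N_{H_k}(c)\setminus N_{H_k}[x]$ and $C'\setminus\{c'\}$, hence has regularity at most $1$. This gives $\reg(D_x)\le\max\{\reg(D_x-c'),2\}$, and induction on $|C'|$ yields $\reg(D_x)\le 2$. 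Your vertex-peeling scheme via Corollary~\ref{cor:induction-sc} then does give $\reg(\BE_e(G_k))\le 3$.

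The paper reaches the same bound topologically. By Jonsson's theorem, $\Ind(Q)\simeq\Sigma(\Gamma_{A,B})$ for every induced $Q=\BE_e(G_k)[A\cup B']$. $C_4$-freeness makes the facets $A\cap N_{H_k}(b)$ of $\Gamma_{A,B}$ meet pairwise in at most one vertex, so $\widetilde{H}_i(\Gamma_{A,B})=0$ for $i\ge 2$, and Hochster's formula gives $\reg\le 3$. Once girth $5$ is imposed, your argument is a valid and more elementary alternative to that homotopy computation.
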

\begin{proof}
Let $H_k$ be a graph with $\girth(H_k)\geq 5$ and $\chi(H_k)\geq k$, and set $G_k:=\overline{H_k}$. By Proposition~\ref{prop:chi-cochord}, we immediately deduce that $\cd(\BE_e(G_k))\geq k$. Thus we are left to prove that $\reg(\BE_e(G_k))\leq 3$. 
		
Let $A,B\subseteq V(G_k)=V(H_k)$ be two arbitrary subsets, and consider the induced bipartite graph $Q:=\BE_e(G_k)[A\cup B']$, where $B':=\{b'\colon b\in B\}$. We note that $ab'\notin E(Q)$ if and only if $a\in N_{H_k}(b)$. We next define a simplicial complex over $A$ by 
\begin{displaymath}
\Gamma_{A,B}:=\{\sigma\subseteq A\colon \exists b\in B\;\textnormal{such that}\;ab'\notin E(Q)\;\textnormal{for every}\;a\in \sigma\}.
\end{displaymath} 
By \citep[Theorem $3.1$]{jonsson}, we conclude that $\Ind(Q)\simeq \Sigma(\Gamma_{A,B})$, where $\Sigma(-)$ denotes the (unreduced) suspension. 
		
Since $H_k$ is $C_4$-free, we have that $|A\cap N_{H_k}(b)\cap N_{H_k}(c)|\leq 1$ for any $b,c\in B$. This in particular implies that any two facets of $\Gamma_{A,B}$ intersect in at most one vertex. However, it then follows that $\widetilde{H}_i(\Gamma_{A,B})=0$ for every $i\geq 2$; hence, $\widetilde{H}_j(\Ind(Q))=0$ for every $j\geq 3$. This proves that $\reg(\BE_e(G_k))\leq 3$.
\end{proof}
%%%%%%%%%%%%%%%%%%%%%%%%%%%%%%%%%%%%%%%%%%%%%%%%%%%%%%%%%%%%%%%%%%%%%%%%%%%%%%%%%%%%
\section{Domination and  matching bounds} 
In this section, we introduce upper bounds to co-chordal covering numbers of extended bipartite double covers of graphs based on matching structures, which will be the main instrument proving some of the cases of Theorem~\ref{thm:thm2}.
	
\begin{definition}
Let $G=(V,E)$ be a graph. A matching $M$ in $G$ is called an \emph{independent matching} if there exists a vertex $x_e\in e$ for each edge $e\in M$ such that the set $X_M:=\{x_e\colon e\in M\}$ is an independent set in $G$. The \emph{independent matching number} $\m_i(G)$ is defined to be the maximum size of an independent matching in $G$. 
\end{definition}

\begin{theorem}\label{thm:cd-mi}
The inequality $\cd(\BE_e(G))\leq |G|-\m_i(G)$ holds for every graph $G$.    
\end{theorem}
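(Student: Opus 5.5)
Let $M=\{x_1y_1,\ldots,x_ky_k\}$ be an independent matching of size $k=\m_i(G)$, with $X_M=\{x_1,\ldots,x_k\}$ independent in $G$. Write $W:=V\setminus\{x_1,\ldots,x_k,y_1,\ldots,y_k\}$. The plan is to exhibit an explicit co-chordal covering of $\BE_e(G)$ with exactly $|G|-k$ members. That is, we use one co-chordal subgraph for each pair $\{x_i,y_i\}$ and one for each remaining vertex $w\in W$, so the count is $k+(|G|-2k)=|G|-k$.

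The basic building block is the star. For a vertex $z\in V$, let $S_z$ be the subgraph of $\BE_e(G)$ consisting of all edges incident to $z$, i.e. $\{zu'\colon u\in N_G[z]\}$. A star is co-chordal, since its complement is a clique plus isolated vertices. The stars $S_z$, $z\in V$, cover every edge of $\BE_e(G)$, because every edge has the form $zu'$. This already gives the trivial bound $\cd(\BE_e(G))\le |G|$.

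For each $w\in W$ we keep the star $S_w$. For each matched pair $x_iy_i$ we must cover $S_{x_i}\cup S_{y_i}$ with a single co-chordal graph. I would take $D_i:=S_{x_i}\cup S_{y_i}$ itself, which consists of all edges at $x_i$ and at $y_i$. Since $x_iy_i\in E(G)$, both $x_iy_i'$ and $y_ix_i'$ are edges of $\BE_e(G)$, so $D_i$ is a union of two stars whose centers $x_i,y_i$ lie on the same side. It is therefore a bipartite graph with two centers on one side. The complement of such a graph consists of two cliques $\{x_i,y_i\}\cup(\text{other vertices})$ and the $V'$-side clique, joined appropriately. A cleaner test is to check directly that $D_i$ is $2K_2$-free, because a bipartite graph is co-chordal if and only if it is $2K_2$-free. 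Two edges $x_iu'$ and $y_iv'$ form an induced $2K_2$ only if $x_iv'$ and $y_iu'$ are both non-edges. If the common neighbourhoods fail this, the $2K_2$ could occur, and I expect this is the main obstacle: $N_G[x_i]$ and $N_G[y_i]$ need not be nested.

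To fix this, I would use independence of $X_M$ and redistribute edges, rather than insisting that $D_i$ contain all of $S_{y_i}$. Let $D_i$ consist of the full star $S_{x_i}$ together with the edges $y_iu'$ for $u\in N_G[x_i]$ (these are still $2K_2$-free, being a "threshold-like" pair). Any leftover edge $y_iu'$ with $u\notin N_G[x_i]$ is reassigned to the graph covering $u$'s side. That is, we view it as the edge $uy_i'$ under the symmetry $zu'\leftrightarrow uz'$ of $\BE_e(G)$ and place it in $S_u$-type members on the $V'$ side. The independence of $X_M$ should guarantee that no leftover needs the star of some $x_j$ in a conflicting way: the centers of the primed stars used would be the $x_j'$, and these are pairwise nonadjacent.

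Concretely, I would try the family consisting of the stars centered at $x_i'$ in $V'$ together with the edges $y_iu'$ for $u\in N[x_i]$, plus the stars $S_w$ for $w\in W$ and at most $k$ further members. I would then verify coverage and $2K_2$-freeness case by case. The verification that each combined member is $2K_2$-free, using $x_i\in N[y_i]$ and the independence of $X_M$, is the step I expect to require the most care.
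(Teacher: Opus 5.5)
There is a genuine gap: you never actually exhibit a covering with $|G|-k$ members. The family you settle on consists of $k$ combined members, the $|G|-2k$ stars $S_w$, and ``at most $k$ further members''. That is up to $|G|$ subgraphs, no better than the trivial star bound.

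With one-sided stars these extra members cannot be avoided. An edge $x_iw'$ with $w\in W\cap N_G(x_i)$ lies in none of your members: not in $S_w$ (all its edges have $V$-endpoint $w$), not in the star centred at $x_i'$, and not among the edges $y_iu'$. Similarly, a leftover edge $y_iu'$ with $u\in W$ and $u\notin N_G[x_i]$ lies in no member of your earlier family.

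The step ``view $y_iu'$ as $uy_i'$ under the symmetry $zu'\leftrightarrow uz'$'' does not help. The edges $y_iu'$ and $uy_i'$ are distinct edges of $\BE_e(G)$, and covering one does not cover the other. Your remark that the $x_j'$ are pairwise nonadjacent also carries no information, since $V'$ is a side of the bipartition. What independence of $X_M$ really gives is that $x_ix_j'\notin E(\BE_e(G))$ for $i\neq j$.

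The missing idea is to use two-sided blocks. For $z\in V$, let $DS_z$ be the double star of all edges of $\BE_e(G)$ incident to $z$ or to $z'$. It is $2K_2$-free, because two disjoint edges in it must be one edge at $z$ and one at $z'$, and these are joined by $zz'\in DS_z$.

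Take $DS_z$ for all $z\in W\cup\{y_1,\dots,y_k\}$; that is exactly $|G|-k$ members, and the $x_i$ get no member of their own. By the independence of $X_M$, every edge $ab'$ other than the diagonal edges $x_ix_i'$ has $a$ or $b$ in $W\cup\{y_1,\dots,y_k\}$, so it lies in $DS_a$ or $DS_b$. Finally, add $x_ix_i'$ to $DS_{y_i}$. This stays $2K_2$-free because $y_ix_i'$ and $x_iy_i'$ are edges of $DS_{y_i}$, as $x_iy_i\in E(G)$.

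This is the paper's covering. Note that it keeps the entire neighbourhood of $y_i$ on both sides and drops the star of $x_i$, which is the opposite of your redistribution.
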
    
\begin{proof}
Let $M$ be an independent matching in $G$, and let $V(M)=X\cup Y$ be a partition of its vertex set such that $X$ is an independent set in $G$. We write $X=\{x_1,\ldots,x_n\}$ and $Y=\{y_1,\ldots,y_n\}$ so that $M=\{x_1y_1,\ldots,x_ny_n\}$. For every vertex $z\in V\setminus (X\cup Y)$, we consider the double-star subgraph $DS_z$ of $\BE_e(G)$ with the center edge $zz'$. Moreover, we define $DS_i$ to be the subgraph of $\BE_e(G)$ obtained from the double-star centered at the edge $y_iy_i'$ together with the additional edge $x_ix_i'$ (see Figure~\ref{fig:claim1}). Notice that $\{DS_z\colon z\in V\setminus (X\cup Y)\}\cup \{DS_i\colon 1\leq i\leq n\}$ is a co-chordal covering of $\BE_e(G)$. 
\end{proof}
	
\begin{figure}[ht]
\centering     %%% not \center
{\subfigure[$DS_z$]{
\begin{tikzpicture}[scale=0.7]
\node [nod2] at (0,0) (u1) []{};
\node [nod2] at (0,2) (v1) []{};      
\node [nod2] at (2,0) (z) [label=below:$z$]{} 
			edge [] (v1);
\node [nod2] at (2,2) (zz) [label=above:$z'$]{} 
			edge [] (u1)
			edge [thick] (z);
\node [nod2] at (4,0) (u2) []{} 
			edge [] (zz);
\node [nod2] at (4,2) (v2) []{} 
			edge [] (z);   
\node [nod2] at (5,0) (u3) []{} 
			edge [] (zz);
\node [nod2] at (5,2) (v3) []{} 
			edge [] (z);   
\end{tikzpicture}
}

\hspace*{1mm}
\subfigure[$DS_i$]{
\begin{tikzpicture}[scale=0.7]
\node [nod2] at (0,0) (u1) []{};
\node [nod2] at (0,2) (v1) []{};      
\node [nod2] at (2,0) (z) [label=below:$y_i$]{} 
			edge [] (v1);
\node [nod2] at (2,2) (zz) [label=above:{$y_i'$}]{} 
			edge [] (u1)
			edge [thick] (z);
\node [nod2] at (4,0) (u2) [label=below:$x_i$]{} 
			edge [] (zz);
\node [nod2] at (4,2) (v2) [label=above:{$x_i'$}]{} 
			edge [] (z)
			edge [] (u2);   
\node [nod2] at (5,0) (u3) []{} 
			edge [] (zz);
\node [nod2] at (5,2) (v3) []{} 
			edge [] (z);   
\end{tikzpicture}
}
}
\caption{Illustrations for the proof of Theorem~\ref{thm:cd-mi}.}
\label{fig:claim1}
\end{figure}\medskip
	
Our next example show that the inequality of Theorem~\ref{thm:cd-mi} could be strict.
\begin{example}\label{example:cd-mi}
Let $G$ be the graph depicted in Figure~\ref{fig:cd-mi}. Then, we have that
$\alpha(G)=\Gamma(G)=4<5=\IR(G)=\cd(\BE_e(G))<8=|G|-\m_i(G)$.
\begin{figure}[ht]
\centering     %%% not \center
\begin{tikzpicture}[scale=0.7]
\node [nod2] at (0,2) (v1) []{};
\node [nod2] at (2,0) (u1) []{}
		edge [] (v1);      
\node [nod2] at (4,0) (u2) []{}
		edge [] (u1); 
\node [nod2] at (6,2) (v2) []{}
		edge [] (u2); 
\node [nod2] at (2,4) (z1) []{}
		edge [] (v1)
		edge [] (u1); 
\node [nod2] at (4,4) (z2) []{}
		edge [] (v2)
		edge [] (u2)
		edge [] (z1);
\node [nod2] at (8,2) (v4) []{}           
		edge [] (v2);
\node [nod2] at (10,0) (u3) []{}
		edge [] (v4);
\node [nod2] at (12,0) (u4) []{}
		edge [] (u3);  
\node [nod2] at (14,2) (v3) []{}
		edge [] (u4);            
\node [nod2] at (10,4) (z3) []{}
		edge [] (v4)
		edge [] (u3);
\node [nod2] at (12,4) (z4) []{}
		edge [] (v3)
		edge [] (u4)
		edge [] (z3);
\end{tikzpicture}
\caption{The graph $G$ in Example~\ref{example:cd-mi}.}
\label{fig:cd-mi}
\end{figure}
\end{example}

Observe that the inequality $\m_i(G)\leq \min\{\alpha(G),\m(G)\}$ holds for every graph $G$. In particular, $\m_i(B)=\m(B)$ for every bipartite graph $B$. We establish a fundamental inequality bridging the domination and independent matching parameters.
	
\begin{theorem}\label{thm:dom-mi}
For every graph $G$ without any isolated vertex, the inequality $\gamma(G)\leq \m_i(G)$ holds.
\end{theorem}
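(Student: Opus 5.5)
We need $\gamma(G)\le \m_i(G)$ for graphs without isolated vertices. The plan is to start from a maximal independent set and build an independent matching of size at least the domination number from it.

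First, choose an independent set $X$ together with a matching $M$ in which each vertex of $X$ is matched to a distinct partner outside $X$. Among all such pairs we take one maximizing $|M|$, and subject to that, a suitable extremality condition. Then we show that $X_M\cup Y_M$, possibly together with a few more vertices, contains a dominating set of size at most $|M|$.

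A cleaner route starts from a maximal independent set $I$. Such an $I$ dominates $G$, so $|I|\ge \gamma(G)$. Consider the bipartite graph $H$ between $I$ and $V\setminus I$. Every vertex of $V\setminus I$ has a neighbor in $I$ by maximality. Every vertex of $I$ has a neighbor in $V\setminus I$, since $G$ has no isolated vertices and $I$ is independent. Any matching in $H$ is an independent matching of $G$, witnessed by its endpoints in $I$. So it suffices to find a matching in $H$ of size at least $\gamma(G)$.

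By König's theorem, the maximum matching size in $H$ equals the minimum vertex cover size of $H$. Take a minimum vertex cover $C=C_I\cup C_O$ with $C_I\subseteq I$ and $C_O\subseteq V\setminus I$. We claim that $D:=C\cup (I\setminus (C_I\cup N_G(C_O)))$ dominates $G$, and we aim to bound $|D|$ by $|C|$. The bound works only if the added set $I\setminus(C_I\cup N_G(C_O))$ is empty. Any vertex $w$ in it has all of its (nonempty) neighborhood, which lies in $V\setminus I$, outside $C_O$. Since $w\notin C_I$, every edge at $w$ would then be uncovered, which contradicts $C$ being a cover. Hence every vertex of $I$ lies in $C_I$ or is adjacent to $C_O$, so $C$ dominates $I$.

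Next we check that $C$ also dominates $V\setminus I$. Let $v\in V\setminus I$ with $v\notin C_O$. Then every $H$-neighbor of $v$ lies in $C_I$, and $v$ has at least one such neighbor by maximality of $I$, so $v$ is dominated by $C_I$. Therefore $C$ is a dominating set of $G$, and $\gamma(G)\le |C|=\m(H)\le \m_i(G)$.

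The main obstacle is ensuring that the cover $C$ dominates every vertex. This is exactly where the two hypotheses enter: the absence of isolated vertices handles vertices of $I$, and maximality of $I$ handles vertices outside $I$. The remaining step to verify carefully is that a matching of $H$ is an independent matching of $G$ in the sense of the definition, with $x_e$ chosen as the endpoint in $I$.
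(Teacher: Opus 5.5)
Your proposal is correct and follows essentially the same route as the paper: a maximal independent set $I$, the bipartite subgraph between $I$ and $V\setminus I$, König's theorem, a check that the minimum vertex cover dominates $G$, and the observation that any matching there is an independent matching witnessed by its $I$-endpoints. Your two-part domination check (no isolated vertices for vertices of $I$, maximality of $I$ for $V\setminus I$) is the paper's single argument, which notes that the bipartite subgraph has no isolated vertices, split into its two cases. The opening abandoned plan and the auxiliary set $D$ are unnecessary.
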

\begin{proof}
Let $G=(V,E)$ be a graph without any isolated vertex. Let $I\subseteq V$ be a maximal independent set of $G$. We construct a bipartite spanning subgraph $B$ of $G$ whose edge set consists precisely of all edges in $E$ that have one endpoint in $I$ and the other in $V\setminus I$.
		
First, we show that $B$ contains no isolated vertices. 
For any $v\in I$, since $G$ has no isolated vertices and $I$ is an independent set, all neighbors of $v$ must belong to $V \setminus I$. Thus, every edge incident to $v$ in $G$ is preserved in $B$, that is, $v$ is not isolated in $B$. 
For any $v\in V\setminus I$, since $I$ is a maximal independent set, $v$ must be adjacent to at least one vertex in $I$. This edge is included in $B$, so $v$ is not isolated in $B$ either.
		
Let $M$ be a maximum matching in the bipartite graph $B$, and let $m=|M|$. By K\"onig's Theorem, $B$ admits a minimum vertex cover $C$ such that $|C|=m$. 
		
We claim that $C$ is a dominating set of $G$. Let $x\in V\setminus C$. Because $B$ has no isolated vertices, the vertex $x$ is incident to at least one edge $e\in E(B)$. Since $C$ is a vertex cover of $B$, it must cover the edge $e$, i.e., the other endpoint of $e$ belongs to $C$. Therefore, $x$ is adjacent to a vertex in $C$. This makes $C$ a dominating set of $B$, and since $B$ is a subgraph of $G$, the set $C$ is also a dominating set of $G$. Consequently, we have that $\gamma(G)\leq |C|=m$.
		
Finally, we show that $M$ is an independent matching in $G$. By the construction of $B$, every edge $e\in M$ has exactly one endpoint in $I$. For each $e\in M$, let $x_e$ be that specific endpoint. The set of representatives $X_M=\{x_e\colon e\in M\}$ is a subset of $I$. Since $I$ is an independent set in $G$, the set $X_M$ is also independent. However, it implies that $M$ is an independent matching. Therefore, the independent matching number $m_i(G)$ must be at least the size of $M$; hence, $m\leq m_i(G)$.
		
As a result, we conclude that $\gamma(G)\leq m\leq m_i(G)$ as claimed.
\end{proof}

We derive the following result as a consequence of Theorems~\ref{thm:cd-mi} and~\ref{thm:dom-mi}.

\begin{corollary}
For every graph $G$, the inequality $\cd(\BE_e(G))\leq |G|-\gamma(G)$ holds.
\end{corollary}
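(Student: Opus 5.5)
The plan is to chain the two preceding results. Theorem~\ref{thm:cd-mi} gives $\cd(\BE_e(G))\leq |G|-\m_i(G)$. Theorem~\ref{thm:dom-mi} gives $\gamma(G)\leq \m_i(G)$, hence $-\m_i(G)\leq -\gamma(G)$. Combining the two yields
\begin{displaymath}
\cd(\BE_e(G))\leq |G|-\m_i(G)\leq |G|-\gamma(G).
\end{displaymath}

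The only point requiring care is the hypothesis. Theorem~\ref{thm:dom-mi} is stated for graphs without isolated vertices, while the corollary says ``every graph''. Throughout the paper $G$ is assumed to have no isolated vertex (this is the standing convention for $\NE[G]$), so I would read the corollary under that convention and apply Theorem~\ref{thm:dom-mi} directly.

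If one insists on allowing isolated vertices, I would reduce to the isolated-free part as follows. Let $G$ have isolated vertices $w_1,\ldots,w_s$ and set $G_0=G-\{w_1,\ldots,w_s\}$. Then $\gamma(G)=\gamma(G_0)+s$. In $\BE_e(G)$, each $w_j$ contributes a single isolated edge $w_jw_j'$, so $\BE_e(G)$ is the disjoint union of $\BE_e(G_0)$ and $s$ copies of $K_2$. Since a disjoint union of two co-chordal graphs is not co-chordal in general, I would not merge these edges into existing subgraphs; instead I would cover each $w_jw_j'$ by its own $K_2$ and get
\begin{displaymath}
\cd(\BE_e(G))\leq \cd(\BE_e(G_0))+s\leq |G_0|-\gamma(G_0)+s=|G|-\gamma(G)+s.
\end{displaymath}
This bound is off by $s$, so the reduction does not reach the claimed inequality. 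Indeed it cannot: for $G=K_1$ we have $\cd(\BE_e(G))=1$ but $|G|-\gamma(G)=0$. The corollary therefore genuinely needs the no-isolated-vertex convention, and the proof is just the two-line chain above.
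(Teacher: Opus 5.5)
Your proof is correct and is exactly the paper's argument: the corollary is obtained by chaining Theorem~\ref{thm:cd-mi} with the bound $\gamma(G)\leq \m_i(G)$ of Theorem~\ref{thm:dom-mi}. Your caveat is also right: the statement has to be read under the paper's standing convention that $G$ has no isolated vertex, since $G=K_1$ gives $\cd(\BE_e(K_1))=1>0=|G|-\gamma(G)$.
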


Let $\tau(G)$ denote the vertex cover number of a graph $G$. Graphs satisfying the equality $\gamma(G)=\tau(G)$ have been widely studied in the literature (see, for instance, the comprehensive surveys in~\citep{dombook}).
	
\begin{corollary}\label{cor:dom-vc}
If $G$ is a graph with $\gamma(G)=\tau(G)$, then $\pd(\NE[G])=\reg(\BE_e(G))=\cd(\BE_e(G))=\alpha(G)$.
\end{corollary}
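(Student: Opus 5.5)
The plan is to sandwich all four quantities between $\alpha(G)$ and $|G|-\gamma(G)$, and then show that these two bounds coincide when $\gamma(G)=\tau(G)$.

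\textbf{Chain of inequalities.} Theorem~\ref{thm:ineqs} gives
\[
\alpha(G)\leq \pd(\NE[G])\leq \reg(\BE_e(G))\leq \cd(\BE_e(G)).
\]
The corollary preceding this statement, obtained by combining Theorems~\ref{thm:cd-mi} and~\ref{thm:dom-mi}, gives
\[
\cd(\BE_e(G))\leq |G|-\m_i(G)\leq |G|-\gamma(G).
\]
Together these yield
\[
\alpha(G)\leq \pd(\NE[G])\leq \reg(\BE_e(G))\leq \cd(\BE_e(G))\leq |G|-\gamma(G).
\]

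\textbf{Closing the gap.} By hypothesis $\gamma(G)=\tau(G)$. By Gallai's identity, the complement of a minimum vertex cover is a maximum independent set, so $\tau(G)=|G|-\alpha(G)$. Hence $|G|-\gamma(G)=|G|-\tau(G)=\alpha(G)$. The two ends of the chain are therefore equal, every inequality in it becomes an equality, and the stated equalities follow.

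\textbf{Caveat.} The only point needing care is the standing assumption that $G$ has no isolated vertices. Theorem~\ref{thm:dom-mi} is stated for such graphs, and $\NE[G]$ is considered only in that setting throughout the paper. Under this assumption the argument is immediate, and I expect no real obstacle.
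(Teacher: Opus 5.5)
Your proof is correct and follows essentially the same route as the paper: you sandwich everything between $\alpha(G)$ and $|G|-\gamma(G)$ using Theorem~\ref{thm:ineqs} and the corollary combining Theorems~\ref{thm:cd-mi} and~\ref{thm:dom-mi}. You then close the gap with $\gamma(G)=\tau(G)=|G|-\alpha(G)$. Your remark about the standing no-isolated-vertex assumption (needed for Theorem~\ref{thm:dom-mi}) is an appropriate, if minor, clarification.
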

\begin{proof}
Since $\gamma(G)=\tau(G)=|G|-\alpha(G)$, using Theorem~\ref{thm:ineqs}, we conclude that 
\begin{displaymath}
\alpha(G)\leq \pd(\NE[G])\leq \reg(\BE_e(G))\leq \cd(\BE_e(G)) \leq |G|-\gamma(G)=\alpha(G)
\end{displaymath}
from which the result follows.
\end{proof}
	
%%%%%%%%%%%%%%%%%%%%%%%%%%%%%%%%%%%%%%%%%%%%%%%%%%%%%%%%%%%%%%%%%
\section{Proof of Theorem~\ref{thm:thm2}}\label{sec:exact_classes}
	
In this section, we present the proofs for the exact equalities stated in Theorem~\ref{thm:thm2}, structured by the respective graph classes. 

%%%%%%%%%%%%%%%%%%%%%%%%%%%%%%%%%%%%%%%%%%%%%%%%%%%%%%%%%%%%%%%%%%%	
\subsection{K{\"{o}}nig-Egerv{\'{a}}ry and Bipartite graphs}
\begin{definition}
A graph $G=(V,E)$ is said to be a \emph{K{\"{o}}nig-Egerv{\'{a}}ry graph} if $\alpha(G)+\m(G)=|G|$.
\end{definition}

We note that the well-known K{\"{o}}nig-Egerv{\'{a}}ry Theorem implies that every bipartite graph is a K{\"{o}}nig-Egerv{\'{a}}ry graph, but there are also non-bipartite graphs in this class.
	
\begin{theorem}\label{thm:konig_exact}
If $G$ is a K{\"{o}}nig-Egerv{\'{a}}ry graph, then $\pd(\NE[G])=\reg(\BE_e(G))=\cd(\BE_e(G))=\alpha(G)$.   
\end{theorem}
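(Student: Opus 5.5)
The plan is to sandwich all the invariants between $\alpha(G)$ and $|G|-\m_i(G)$, using the chain of Theorem~\ref{thm:ineqs} and the bound of Theorem~\ref{thm:cd-mi}. By Theorem~\ref{thm:ineqs},
\begin{displaymath}
\alpha(G)\leq \pd(\NE[G])\leq \reg(\BE_e(G))\leq \cd(\BE_e(G)),
\end{displaymath}
and by Theorem~\ref{thm:cd-mi}, $\cd(\BE_e(G))\leq |G|-\m_i(G)$. So it suffices to show $\m_i(G)\geq \m(G)$ for a K\"onig-Egerv\'ary graph. Then $|G|-\m_i(G)\leq |G|-\m(G)=\alpha(G)$, and every inequality in the chain collapses to equality.

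The key step is therefore to produce a maximum matching that is an independent matching. I will use the following structural fact about K\"onig-Egerv\'ary graphs. Let $S$ be a maximum independent set and $M$ a maximum matching. Then $V\setminus S$ is a vertex cover of size $|G|-\alpha(G)=\m(G)=|M|$. Every edge of $M$ has at least one endpoint in the cover $V\setminus S$. Distinct edges of $M$ are disjoint, and the cover has exactly $|M|$ vertices, so each edge of $M$ has exactly one endpoint in $V\setminus S$. Consequently each edge $e\in M$ has exactly one endpoint $x_e\in S$.

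The set $X_M=\{x_e\colon e\in M\}$ lies inside $S$, so it is independent. Hence $M$ is an independent matching, and $\m_i(G)\geq \m(G)$. Combined with the general inequality $\m_i(G)\leq \m(G)$ noted before Theorem~\ref{thm:dom-mi}, this gives $\m_i(G)=\m(G)$.

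I expect no real obstacle. The only point needing care is the counting argument showing that no edge of $M$ lies entirely inside $V\setminus S$; this is exactly where the equality $\alpha(G)+\m(G)=|G|$ is used. One should also note that $G$ has no isolated vertices, as assumed throughout, so that Theorem~\ref{thm:ineqs} applies. Bipartite graphs are covered as a special case via the K\"onig-Egerv\'ary Theorem.
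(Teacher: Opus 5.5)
Your proposal is correct and follows the paper's proof. The paper also shows $\m_i(G)=\m(G)$ and then collapses the chain of Theorem~\ref{thm:ineqs} with Theorem~\ref{thm:cd-mi}. The only difference is that you prove, by a direct counting argument with the vertex cover $V\setminus S$ of size $\m(G)$, that each edge of a maximum matching has exactly one endpoint in a maximum independent set $S$; the paper instead quotes this fact from the literature.
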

\begin{proof}
Let $S\subseteq V$ be a maximum independent set. Since $G$ is a K{\"{o}}nig-Egerv{\'{a}}ry graph, there exists a maximum matching
$M$ with $V(M)=X\cup Y$ such that $X\subseteq S$ and $Y\subseteq V\setminus S$ by~\citep{levit-2013}. However, this implies that $\m_i(G)=\m(G)$, since $S$ is an independent set. In particular, we have that $\alpha(G)=|G|-\m_i(G)$. By Theorems~\ref{thm:ineqs} and~\ref{thm:cd-mi}, we conclude that
\begin{align*}
\alpha(G)\leq \Gamma(G)\leq \pd(\NE[G])\leq &\reg(\BE_e(G))\\
&\leq \cd(\BE_e(G)) \leq |G|-\m_i(G)=\alpha(G).
\end{align*}
This establishes equality throughout.
\end{proof}
	
\begin{corollary}
If $B$ is a bipartite graph, then 
\begin{displaymath}
\pd(\NE[B])=\reg(\BE_e(B))=\cd(\BE_e(B))=\alpha(B).
\end{displaymath}
\end{corollary}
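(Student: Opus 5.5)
This corollary is a direct specialization of Theorem~\ref{thm:konig_exact}, so the plan is simply to verify that every bipartite graph $B$ is a K{\"{o}}nig-Egerv{\'{a}}ry graph, i.e.\ that $\alpha(B)+\m(B)=|B|$, and then invoke that theorem.

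For the verification, I use K\"onig's Theorem (already used in the proof of Theorem~\ref{thm:dom-mi}): in a bipartite graph the matching number equals the vertex cover number, $\m(B)=\tau(B)$. Complements of vertex covers are exactly independent sets, so $\tau(B)=|B|-\alpha(B)$. Combining the two gives $\alpha(B)+\m(B)=|B|$, so $B$ is K{\"{o}}nig-Egerv{\'{a}}ry. Theorem~\ref{thm:konig_exact} then yields $\pd(\NE[B])=\reg(\BE_e(B))=\cd(\BE_e(B))=\alpha(B)$.

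Alternatively, one can avoid Theorem~\ref{thm:konig_exact} and argue directly. Since $\m_i(B)=\m(B)$ for bipartite $B$ (take the representatives from one side of the bipartition), Theorem~\ref{thm:cd-mi} gives $\cd(\BE_e(B))\leq |B|-\m(B)=\alpha(B)$. Sandwiching this with the chain of Theorem~\ref{thm:ineqs} gives equality throughout.

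The only point needing care is the standing hypothesis that there are no isolated vertices, which is implicit throughout and is needed so that $\NE[B]$ is as intended. Theorem~\ref{thm:konig_exact} imposes no further restriction, so there is no real obstacle.
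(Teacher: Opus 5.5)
Your proposal is correct and matches the paper: the corollary is stated without a separate proof, as a direct consequence of Theorem~\ref{thm:konig_exact} together with the observation (made just before that theorem) that every bipartite graph is K\"onig-Egerv\'ary by K\"onig's theorem. Your alternative direct route via $\m_i(B)=\m(B)$ and Theorem~\ref{thm:cd-mi} is just the proof of Theorem~\ref{thm:konig_exact} specialized to the bipartite case. It is also valid.
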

	
%%%%%%%%%%%%%%%%%%%%%%%%%%%%%%%%%%%%%%%%%%%%%%%%%%%%%%%%%%%%%%
\subsection{Cographs}
We begin recalling that the \emph{join} of two (disjoint) graphs $G_1$ and $G_2$, denoted by $G_1\ast G_2$, is the graph obtained from their  union by adding all possible edges between the vertices of $G_1$ and $G_2$. A graph $G$ is called a \emph{cograph} if it is $P_4$-free. 
		
Cographs have several characterizations, and in particular, they can
be constructed recursively as follows:
\begin{itemize}
\item The single-vertex graph $K_1$ is a cograph.
\item If $G_1$ and $G_2$ are cographs, their disjoint union $G_1\cup G_2$ is a cograph.
\item If $G_1$ and $G_2$ are cographs, their join $G_1\ast G_2$ is a cograph.
\end{itemize}
	
\begin{theorem}\label{thm:cograph_exact}
For every cograph $G$, the equalities
\begin{displaymath}
\pd(\NE[G])=\reg(\BE_e(G))=\cd(\BE_e(G))=\alpha(G)
\end{displaymath}
hold.
\end{theorem}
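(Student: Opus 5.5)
The plan is to squeeze everything between $\alpha(G)$ and $\cd(\BE_e(G))$. By Theorem~\ref{thm:ineqs} we already have
$\alpha(G)\leq \pd(\NE[G])\leq \reg(\BE_e(G))\leq \cd(\BE_e(G))$, so it suffices to prove $\cd(\BE_e(G))\leq \alpha(G)$ for every cograph $G$. I will do this by induction along the recursive construction of cographs. For the induction I allow isolated vertices: the statement $\cd(\BE_e(G))\leq\alpha(G)$ still makes sense, and $\BE_e(K_1)$ is a single edge, so $\cd=1=\alpha$.

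The basic tool is the fact that a bipartite graph is co-chordal if and only if it is $2K_2$-free. Indeed, the complement of a bipartite graph $(X,Y;E)$ consists of two cliques $X$ and $Y$ together with the non-edges between them. Such a graph has no induced cycle of length at least $5$, and it contains an induced $C_4$ exactly when $E$ contains an induced $2K_2$. Every subgraph in a co-chordal covering of $\BE_e(G)$ is bipartite, so such a covering is the same thing as a covering of $E(\BE_e(G))$ by $2K_2$-free subgraphs.

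\emph{Disjoint union.} For $G=G_1\cup G_2$, the graph $\BE_e(G)$ is the disjoint union of $\BE_e(G_1)$ and $\BE_e(G_2)$. Hence
\begin{displaymath}
\cd(\BE_e(G))\leq \cd(\BE_e(G_1))+\cd(\BE_e(G_2))\leq \alpha(G_1)+\alpha(G_2)=\alpha(G).
\end{displaymath}

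\emph{Join.} Let $G=G_1\ast G_2$ with $V_i=V(G_i)$. Then $\alpha(G)=\max\{\alpha(G_1),\alpha(G_2)\}$. The edge set of $\BE_e(G)$ is the union of four parts: $E(\BE_e(G_1))$, $E(\BE_e(G_2))$, all pairs $xw'$ with $x\in V_1$, $w\in V_2$ (call this $K(V_1,V_2')$), and all pairs $uy'$ with $u\in V_2$, $y\in V_1$ (call this $K(V_2,V_1')$).

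If $\alpha(G)=1$, then $G$ is complete and $\BE_e(G)\cong K_{n,n}$, so $\cd(\BE_e(G))=1$.

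Otherwise assume $a=\alpha(G_1)\geq 2$ and $a\geq b=\alpha(G_2)$. By induction, take $2K_2$-free coverings $R_1,\dots,R_a$ of $\BE_e(G_1)$ and $S_1,\dots,S_b$ of $\BE_e(G_2)$, and pad the second list with empty graphs $S_{b+1},\dots,S_a$. Define
\begin{align*}
T_1&:=R_1\cup S_1\cup K(V_1,V_2'),\\
T_2&:=R_2\cup S_2\cup K(V_2,V_1'),\\
T_j&:=R_j\cup S_j\quad\textnormal{for } 3\leq j\leq a.
\end{align*}
These $a$ subgraphs cover $E(\BE_e(G))$, so it remains to check that each is $2K_2$-free.

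This check is the main step, and it is a short case analysis for $T_1$; the case of $T_2$ is symmetric, and $T_j$ for $j\geq 3$ is a disjoint union of two $2K_2$-free graphs on disjoint vertex sets.
\begin{itemize}
\item Two edges inside $R_1$, or two inside $S_1$: these cannot form an induced $2K_2$ by hypothesis.
\item An edge $xy'\in R_1$ and an edge $uw'\in S_1$: here $x\in V_1$ and $w'\in V_2'$, so $xw'\in K(V_1,V_2')\subseteq T_1$ joins them.
\item A cross edge $xw'$ and an edge $ab'\in R_1$: the edge $aw'$ joins them.
\item A cross edge $xw'$ and an edge $uz'\in S_1$: the edge $xz'$ joins them.
\item Two disjoint cross edges $x_1w_1'$ and $x_2w_2'$: the edge $x_1w_2'$ joins them.
\end{itemize}
Hence no induced $2K_2$ exists in $T_1$, and $\cd(\BE_e(G))\leq a=\alpha(G)$.

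The only delicate point is that the two complete bipartite blocks $K(V_1,V_2')$ and $K(V_2,V_1')$ together form a $2K_2$, so they cannot be placed in the same member of the covering. This is exactly why two distinct members $T_1,T_2$ are needed, and why the case $\alpha(G)=1$ is treated separately.
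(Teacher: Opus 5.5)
There is a genuine error in the join step, in the members $T_j$ with $j\geq 3$. Your overall plan is the same as the paper's: squeeze via Theorem~\ref{thm:ineqs}, induct over the cograph construction, use $2K_2$-freeness to test co-chordality of the bipartite pieces, and put the two cross blocks into different members.

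You set $T_j:=R_j\cup S_j$ and justify it by saying it is a disjoint union of two $2K_2$-free graphs on disjoint vertex sets. That does not make it $2K_2$-free. Suppose $R_j$ contains an edge $e$ (on $V_1\cup V_1'$) and $S_j$ contains an edge $f$ (on $V_2\cup V_2'$). Then nothing in $T_j$ joins them, so $\{e,f\}$ is an induced $2K_2$ in $T_j$ and its complement contains an induced $C_4$. This happens whenever $\alpha(G_2)\geq 3$. For example, take $G_1=G_2=\overline{K_3}$, so $G=K_{3,3}$. Any $2K_2$-free cover of $\BE_e(\overline{K_3})$ by three members must put one diagonal edge in each member, so $T_3=\{x_3x_3',\,w_3w_3'\}$ is exactly $2K_2$.

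The repair is what the paper does: add a cross block to every member, for instance $T_j:=R_j\cup S_j\cup K(V_1,V_2')$ for $j\geq 3$ (the paper's $U_i=S_i\cup T_i\cup E_{G_1,G_2}$). Your case analysis for $T_1$ then carries over verbatim. It only used that $K(V_1,V_2')$ joins every edge with an endpoint in $V_1$ to every vertex-disjoint edge with an endpoint in $V_2'$. Every edge of $R_j$ has an endpoint in $V_1$ and every edge of $S_j$ has an endpoint in $V_2'$, so the added block kills exactly the $R_j$--$S_j$ pairs that break your version. With this change your argument is complete and coincides with the paper's proof.
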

\begin{proof}
Let $G$ be a cograph. We prove that the inequality $\cd(\BE_e(G))\leq \alpha(G)$  holds for any cograph by using the recursive construction of cographs together with induction.\medskip
		
{\em Base case}: If $G=K_1$, then $\alpha(K_1)=1$. The extended double cover $\BE_e(K_1)$ is a single edge $vv'$, which is trivially co-chordal, providing $\cd(\BE_e(K_1))=1\leq \alpha(K_1)$.\medskip
		
{\em Inductive step} $1$ (the disjoint union): Suppose $G=G_1\cup G_2$ for some cographs $G_1$ and $G_2$. The independence number satisfies $\alpha(G)=\alpha(G_1)+\alpha(G_2)$. Because no edge exists between $G_1$ and $G_2$, the graph $\BE_e(G)$ is the disjoint union of $\BE_e(G_1)$ and $\BE_e(G_2)$. Taking the union of their respective co-chordal covers produces a cover of size $\alpha(G_1)+\alpha(G_2)$.\medskip
		
{\em Inductive step} $2$ (the join operation): Suppose $G=G_1\ast G_2$ for some cographs $G_1$ and $G_2$. We know that $\alpha(G)=\max(\alpha(G_1), \alpha(G_2))$. Assume without loss of generality that $k=\alpha(G_1)\geq \alpha(G_2)$. If $k=1$, the graph $G$ is a complete graph and $\BE_e(G)\cong K_{n,n}$, which is $2K_2$-free. 
		
For $k\geq 2$, let $\{S_1,\dots,S_k\}$ and $\{T_1,\dots,T_{\alpha(G_2)}\}$ be co-chordal covers of $\BE_e(G_1)$ and $\BE_e(G_2)$ respectively. For each $j>\alpha(G_2)$, we set $T_j=\emptyset$. We partition the set of bipartite cross-edges $E_{\textnormal{cross}}$ of $\BE_e(G)$ directionally:
\begin{displaymath}
E_{G_1,G_2}=\{uv'\mid u \in V(G_1), v\in V(G_2)\}, \quad E_{G_2,G_1}=\{ vu'\mid v \in V(G_2), u\in V(G_1)\}.
\end{displaymath}

We next define $k$ subgraphs $U_1,\dots,U_k$ by setting $U_1:=S_1\cup T_1\cup E_{G_1,G_2}$, $U_2:=S_2\cup T_2\cup E_{G_2,G_1}$, and $U_i:= S_i\cup T_i\cup E_{G_1,G_2}$ for $i \geq 3$. 
		
We now show that each subgraph $U_i$ is co-chordal, i.e., no induced $2K_2$ can be sustained within any $U_i$. Let $e_1,e_2\in U_1$. Since $S_1$ and $T_1$ are $2K_2$ free, there are four possibilities of an induced copy of $2K_2$:
\begin{enumerate}
\item $e_1=uu_1^\prime \in S_1$ and $e_2=vv_1^\prime \in T_1$. In this case, we have $uv_1^\prime \in E_{G_1,G_2} \subseteq U_1$. Hence, $e_1, e_2$ can not induce $2K_2$.
			
\item $e_1=uu_1^\prime \in S_1$ and $e_2=u_2v^\prime \in E_{G_1,G_2}$. In this case, we have $uv^\prime \in E_{G_1,G_2} \subseteq U_1$. Hence, $e_1, e_2$ can not induce $2K_2$.
			
\item $e_1=vv_1^\prime \in T_1$ and $e_2=uv^\prime \in E_{G_1,G_2}$. In this case, we have $uv_1^\prime \in E_{G_1,G_2} \subseteq U_1$. Hence, $e_1, e_2$ can not induce $2K_2$.
			
\item $e_1=u_1v_1^\prime, e_2=u_2v_2^\prime \in E_{G_1,G_2}$. In this case, we have $u_1v_2^\prime \in E_{G_1,G_2} \subseteq U_1$. Hence, $e_1, e_2$ can not induce $2K_2$.
\end{enumerate}
A similar argument applies to show that $U_i$ is co-chordal for any $i\geq 2$. This in particular implies that $\cd(\BE_e(G))\leq \alpha(G)$ by the induction.

As a result we conclude that 
\begin{align*}
\alpha(G)\leq \pd(\NE[G])\leq \reg(\BE_e(G))\leq \cd(\BE_e(G))\leq \alpha(G)
\end{align*}
by Theorem~\ref{thm:ineqs}. This completes the proof.
\end{proof}
%%%%%%%%%%%%%%%%%%%%%%%%%%%%%%%%%%%%%%%%%%%%%%%%%%%%%%%%%%%%%%%%	
\subsection{Co-chordal graphs}
We start with the following easy fact, which can be directly obtained from Corollary~\ref{cor:induction-sc} together with an induction. 
\begin{fact}\label{fact:bip-reg}
If $B=(V_1,V_2;E)$ is a bipartite graph, then $\reg(B)\leq \min\{|V_1|,|V_2|\}$.
\end{fact}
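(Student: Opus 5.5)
We argue by induction on $|V_1|+|V_2|$ using Corollary~\ref{cor:induction-sc}. The base case is a bipartite graph with no edges (possibly with $V_1$ or $V_2$ empty). Its edge ideal is zero, so $\reg(B)=0\leq \min\{|V_1|,|V_2|\}$. The same holds for any graph with no edges, which takes care of all degenerate cases arising in the induction.

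For the inductive step, assume $B$ has at least one edge. Then both $V_1$ and $V_2$ are nonempty. By symmetry we may suppose $|V_1|\leq |V_2|$, and we choose a vertex $v\in V_1$.

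Corollary~\ref{cor:induction-sc} gives
\begin{displaymath}
\reg(B)\leq \max\{\reg(B-v),\reg(B-N_B[v])+1\}.
\end{displaymath}
The graph $B-v$ is bipartite with parts $V_1\setminus\{v\}$ and $V_2$. By induction, $\reg(B-v)\leq \min\{|V_1|-1,|V_2|\}\leq |V_1|$. The graph $B-N_B[v]$ is bipartite with parts $V_1\setminus\{v\}$ and $V_2\setminus N_B(v)$. By induction, its regularity is at most $|V_1|-1$, so $\reg(B-N_B[v])+1\leq |V_1|$.

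Both terms are therefore bounded by $|V_1|=\min\{|V_1|,|V_2|\}$, which completes the induction.

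If $v$ happens to be isolated, then $B-N_B[v]=B-v$ and the same bound still applies. There is no genuine obstacle here. The only care needed is to choose $v$ from the smaller part, so that deleting $v$ lowers that part's size by one in both branches of the recursion.
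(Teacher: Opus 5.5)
Your proof is correct and is the argument the paper intends: the paper says only that this fact follows directly from Corollary~\ref{cor:induction-sc} by induction. Your write-up supplies that induction, and the key point is choosing $v$ in the smaller part so that both branches of the recursion are bounded by $|V_1|$.
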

		
\begin{lemma}\label{lem:reg-deg-complement}
For any graph $G$ and any vertex $v\in V$, the inequality
\begin{displaymath}
\reg(\BE_e(G))\leq \max \{\reg(\BE_e(G-v)),\deg_{\overline{G}}(v)+1\}
\end{displaymath}
holds.
\end{lemma}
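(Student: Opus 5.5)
The plan is to apply Corollary~\ref{cor:induction-sc} twice inside $\BE_e(G)$, once at $v$ and once at $v'$, and then bound the leftover piece with Fact~\ref{fact:bip-reg}.

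\textbf{First deletion.} Write $B:=\BE_e(G)$ and apply Corollary~\ref{cor:induction-sc} to the vertex $v\in V$. This gives
\begin{displaymath}
\reg(B)\leq \max\{\reg(B-v),\ \reg(B-N_B[v])+1\}.
\end{displaymath}

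\textbf{Second deletion.} Apply Corollary~\ref{cor:induction-sc} again to $B-v$ at the vertex $v'$. In $B-v$ the neighborhood of $v'$ is $\{u\colon u\in N_G(v)\}$, so
\begin{displaymath}
\reg(B-v)\leq \max\{\reg(B-v-v'),\ \reg(B-v-N_{B-v}[v'])+1\}.
\end{displaymath}
Deleting the pair $v,v'$ from $\BE_e(G)$ gives exactly $\BE_e(G-v)$. This holds because adjacency between $u$ and $w'$ with $u,w\neq v$ depends only on $G-v$. Hence $\reg(B-v-v')=\reg(\BE_e(G-v))$.

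\textbf{Bounding the leftover terms.} It remains to show that both terms of the form $\reg(\cdot)+1$ are at most $\deg_{\overline{G}}(v)+1$.

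For the first one, $N_B[v]=\{v\}\cup\{u'\colon u\in N_G[v]\}$. So $B-N_B[v]$ is a bipartite graph with parts $V\setminus\{v\}$ and $\{w'\colon w\notin N_G[v]\}$. The second part has exactly $\deg_{\overline{G}}(v)$ vertices. Fact~\ref{fact:bip-reg} then gives $\reg(B-N_B[v])\leq \deg_{\overline{G}}(v)$.

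For the second one, $B-v-N_{B-v}[v']$ has parts $\{w\colon w\notin N_G[v]\}$ and $V'\setminus\{v'\}$. The first part again has size $\deg_{\overline{G}}(v)$, so Fact~\ref{fact:bip-reg} gives the same bound.

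\textbf{Conclusion.} Combining the two displays yields
\begin{displaymath}
\reg(\BE_e(G))\leq \max\{\reg(\BE_e(G-v)),\ \deg_{\overline{G}}(v)+1\}.
\end{displaymath}

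The main point needing care is the correct identification of the neighborhoods in $\BE_e(G)$ and of $B-v-v'$ with $\BE_e(G-v)$. This includes the diagonal edge $vv'$ and the fact that $v'$ is not adjacent to $v$ once $v$ is removed. Degenerate cases, such as an empty side when $\deg_{\overline{G}}(v)=0$, are covered because a graph with no edges has regularity $0$.
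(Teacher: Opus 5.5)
Your proof is correct and follows the paper's argument essentially step for step. You apply Corollary~\ref{cor:induction-sc} in $\BE_e(G)$ at $v$, then in $\BE_e(G)-v$ at $v'$, identify $\BE_e(G)-\{v,v'\}$ with $\BE_e(G-v)$, and bound both neighborhood-deletion terms by Fact~\ref{fact:bip-reg}, since one side has exactly $\deg_{\overline{G}}(v)$ vertices (you also handle the case $\deg_{\overline{G}}(v)=0$ correctly).
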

\begin{proof}
As earlier, we write $A':=\{a'\colon a\in A\}$ for any subset $A\subseteq V$, and we do not distinguish a vertex $v$ in $G$ and its corresponding counterpart in $\BE_e(G)$. If we set $B:=\BE_e(G)$, it follows that 
\begin{displaymath}
(V(B)\setminus N_B[v])'=\{x'\colon x\in N_{\overline{G}}(v)\}. 
\end{displaymath}
Therefore, we have that $\reg(B-N_B[v])\leq \deg_{\overline{G}}(v)$ by  Fact~\ref{fact:bip-reg}. As a result, we conclude that
\begin{align*}
\reg(B)&\leq \max \{\reg(B-v),\reg(B-N_B[v])+1\}\\
&\leq \max \{\reg(B-v), \deg_{\overline{G}}(v)+1\}
\end{align*}
by Corollary~\ref{cor:induction-sc}. Now, consider the bipartite graph $B^*:=B-v$. Once again by Fact~\ref{fact:bip-reg}, the inequality $\reg(B^*-N_{B^*}[v'])\leq \deg_{\overline{G}}(v)$ holds, since
\begin{displaymath}
(V(B^*)\setminus N_{B^*}[v'])=N_{\overline{G}}(v). 
\end{displaymath}
However, as $B^*-v'=\BE_e(G-v)$, we obtain that
\begin{displaymath}
\reg(B-v)\leq \max \{\reg(\BE_e(G-v)),\deg_{\overline{G}}(v)+1\}.
\end{displaymath}
It then follows that
\begin{align*}
\reg(B)&\leq \max \{\reg(B-v), \deg_{\overline{G}}(v)+1\}\\
&\leq \max \{\reg(\BE_e(G-v)),\deg_{\overline{G}}(v)+1\}
\end{align*}
as claimed.
\end{proof}
		
\begin{theorem}\label{thm:cochordal_exact}
If $G$ is a co-chordal graph, then $\pd(\NE[G])=\reg(\BE_e(G))=\alpha(G)$.
\end{theorem}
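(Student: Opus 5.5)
By Theorem~\ref{thm:ineqs} we already have $\alpha(G)\leq \pd(\NE[G])\leq \reg(\BE_e(G))$. So it suffices to prove the upper bound $\reg(\BE_e(G))\leq \alpha(G)$ for every co-chordal graph $G$. I will do this by induction on $|G|$, using Lemma~\ref{lem:reg-deg-complement} as the inductive engine. One remark first: the theorem as stated has no isolated-vertex hypothesis, and the argument below never uses that hypothesis.

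Since $\overline{G}$ is chordal, it has a simplicial vertex $v$. Then $N_{\overline{G}}(v)$ is a clique in $\overline{G}$, so it is an independent set of $G$. Moreover, $v$ is adjacent in $\overline{G}$ to every vertex of $N_{\overline{G}}(v)$, so $v$ is non-adjacent in $G$ to all of them. Hence $N_{\overline{G}}(v)\cup\{v\}$ is an independent set of $G$ of size $\deg_{\overline{G}}(v)+1$, which gives
\begin{displaymath}
\deg_{\overline{G}}(v)+1\leq \alpha(G).
\end{displaymath}

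Next, $G-v$ is again co-chordal, because $\overline{G-v}=\overline{G}-v$ is an induced subgraph of a chordal graph. By the induction hypothesis, $\reg(\BE_e(G-v))\leq \alpha(G-v)\leq \alpha(G)$. Lemma~\ref{lem:reg-deg-complement} then yields
\begin{displaymath}
\reg(\BE_e(G))\leq \max\{\reg(\BE_e(G-v)),\deg_{\overline{G}}(v)+1\}\leq \alpha(G).
\end{displaymath}
For the base case, a single vertex has $\BE_e(K_1)=K_2$, whose regularity is $1=\alpha(K_1)$. Alternatively, the empty graph has regularity $0$, which also starts the induction.

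The only delicate point is the choice of the vertex: one must pick a simplicial vertex of $\overline{G}$, not of $G$, so that the complement-degree term in the lemma is controlled by $\alpha(G)$. Once that choice is made, the argument is routine.
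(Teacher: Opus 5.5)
Your proof is correct and follows the paper's argument: choose a simplicial vertex $v$ of $\overline{G}$, bound $\deg_{\overline{G}}(v)+1$ by $\omega(\overline{G})=\alpha(G)$, apply induction to the co-chordal graph $G-v$, and finish with Lemma~\ref{lem:reg-deg-complement}. The paper leaves two details implicit that you make explicit: the base case, and the fact that the upper bound must be proved without the isolated-vertex hypothesis because $G-v$ may acquire isolated vertices.
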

\begin{proof}
Let $v\in V$ be a simplicial vertex of the chordal graph $\overline{G}$. Since $N_{\overline{G}}[v]$ is a clique, we obtain that $\deg_{\overline{G}}(v)+1\leq \omega(\overline{G})=\alpha(G)$. On the other hand, since $G-v$ is also a co-chordal graph, the induction implies that $\reg(\BE_e(G-v))\leq \alpha(G-v)\leq \alpha(G)$. Now, by Lemma~\ref{lem:reg-deg-complement}, we conclude that
\begin{displaymath}
\reg(\BE_e(G))\leq \max \{\reg(\BE_e(G-v)),\deg_{\overline{G}}(v)+1\}\leq \alpha(G),
\end{displaymath}
from which the claim follows.
\end{proof}
		
Lemma~\ref{lem:reg-deg-complement} provides an interesting upper bound that we explain next. Let $G$ be a graph. The \emph{degeneracy} $\degen(G)$ of $G$ is defined to be the smallest integer $k$ such that every nonempty induced subgraph of $G$ has a vertex of degree at most $k$. The following is a direct consequence of Lemma~\ref{lem:reg-deg-complement}.
		
\begin{corollary}
$\pd(\NE[G])\leq \reg(\BE_e(G))\leq \degen(\overline{G})+1$ for any graph $G$.
\end{corollary}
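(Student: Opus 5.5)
The first inequality $\pd(\NE[G])\leq \reg(\BE_e(G))$ is Theorem~\ref{thm:thm1} (equivalently part of Theorem~\ref{thm:ineqs}), so all the work is in the second inequality. I plan to prove it by induction on $|G|$ using Lemma~\ref{lem:reg-deg-complement}. Let $d:=\degen(\overline{G})$. By the definition of degeneracy, the complement $\overline{G}$ has a vertex $v$ with $\deg_{\overline{G}}(v)\leq d$. Applying Lemma~\ref{lem:reg-deg-complement} to this vertex gives
\begin{displaymath}
\reg(\BE_e(G))\leq \max\{\reg(\BE_e(G-v)),\deg_{\overline{G}}(v)+1\}\leq \max\{\reg(\BE_e(G-v)),d+1\}.
\end{displaymath}

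For the inductive step, note that $\overline{G-v}=\overline{G}-v$ is an induced subgraph of $\overline{G}$. Degeneracy is monotone under taking induced subgraphs, because every nonempty induced subgraph of $\overline{G}-v$ is also one of $\overline{G}$. Hence $\degen(\overline{G-v})\leq d$, and the induction hypothesis gives $\reg(\BE_e(G-v))\leq \degen(\overline{G-v})+1\leq d+1$. Combining this with the display yields $\reg(\BE_e(G))\leq d+1$.

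The only points needing care are the base case and isolated vertices. When $|G|=1$, the graph $\BE_e(K_1)$ is a single edge, so its regularity is $1=\degen(\overline{K_1})+1$. Lemma~\ref{lem:reg-deg-complement} is stated for any graph, so removing $v$ may create isolated vertices in $G-v$ without affecting the argument; the bipartite double cover is defined regardless. Alternatively, one can unroll the recursion along a degeneracy ordering $v_1,\ldots,v_n$ of $\overline{G}$, in which each $v_i$ has at most $d$ neighbours among $v_{i+1},\ldots,v_n$, and apply the lemma repeatedly. I expect no real obstacle here; the statement is essentially a repackaging of Lemma~\ref{lem:reg-deg-complement}, exactly as in the proof of Theorem~\ref{thm:cochordal_exact}.
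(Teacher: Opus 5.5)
Your proposal is correct and matches the paper's argument. The paper states the corollary as a direct consequence of Lemma~\ref{lem:reg-deg-complement}, and your induction on $|G|$ uses a vertex of $\overline{G}$ of degree at most $\degen(\overline{G})$, together with the fact that degeneracy does not increase on induced subgraphs. This is the same inductive pattern the paper uses in the proof of Theorem~\ref{thm:cochordal_exact}.
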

	
\begin{example}
If $G$ is a co-planar graph, then $\pd(\NE[G])\leq \reg(\BE_e(G))\leq 6$.
\end{example}
\begin{example}
If $G$ is a $k$-regular graph on $n$-vertices, then 
\begin{displaymath}
\pd(\NE[G])\leq \reg(\BE_e(G))\leq n-k.
\end{displaymath}
\end{example}
	
%%%%%%%%%%%%%%%%%%%%%%%%%%%%%%%%%%%%%%%%%%%%%%%%%%%%%%%%%%%%%%
\subsection{Chordal graphs}\label{sec:chordal}
Let $G$ be a graph. An ordering $v_1<\ldots<v_n$ of vertices of $G$ is called a \emph{perfect elimination ordering} ($\textnormal{PEO}$ for short) if the vertex $v_i$ is simplicial in $G_i:=G[\{v_i,\ldots,v_n\}]$ for all $1\leq i\leq n$. Recall that a graph is chordal if and only if it has a perfect elimination ordering~\citep[Theorem $1.2.2$]{BLS-gclass}.
	
\begin{theorem}\label{thm:chordal_exact}
For every chordal graph $G$, the equalities
\begin{displaymath}
\pd(\NE[G])=\reg(\BE_e(G))=\cd(\BE_e(G))=\alpha(G)
\end{displaymath}
hold.
\end{theorem}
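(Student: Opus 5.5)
By Theorem~\ref{thm:ineqs}, the chain $\alpha(G)\leq \pd(\NE[G])\leq \reg(\BE_e(G))\leq \cd(\BE_e(G))$ holds for every graph. So it suffices to show $\cd(\BE_e(G))\leq \alpha(G)$ for every chordal graph $G$, that is, to cover the edges of $\BE_e(G)$ by at most $\alpha(G)$ co-chordal subgraphs. Since each $\BE_e(G)$ is bipartite, co-chordal here amounts to being $2K_2$-free, i.e.\ a difference graph. We may assume $G$ is connected, since $\BE_e$ of a disjoint union is the disjoint union and both sides are additive.

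The plan is to use the classical fact that for chordal graphs $\alpha(G)$ equals the clique cover number $\theta(G)$. Concretely, a greedy PEO argument gives an independent set $\{s_1,\ldots,s_k\}$ of simplicial vertices, with $k=\alpha(G)$, such that the cliques $Q_j:=N_{G_{i_j}}[s_j]$ partition $V$. Here $s_j=v_{i_j}$ is chosen as the first remaining vertex in the PEO, and its later closed neighborhood is then deleted.

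For each $j$, I would take $U_j$ to be the subgraph of $\BE_e(G)$ consisting of all edges $xy'$ with $x\in Q_j$ and $y\in N_G[x]$, i.e.\ all edges leaving the ``$Q_j$-side''. These $U_j$ obviously cover $E(\BE_e(G))$, since every edge $xy'$ has $x$ in exactly one $Q_j$. The key step is to check that each $U_j$ is $2K_2$-free. Suppose $xy'$ and $uw'$ are in $U_j$ with $x,u\in Q_j$. Since $Q_j$ is a clique, $x\in N_G[u]$ and $u\in N_G[x]$. We need $xw'$ or $uy'$ to be an edge, i.e.\ $w\in N_G[x]$ or $y\in N_G[u]$. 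This requires the neighborhoods $\{N_G[x]\colon x\in Q_j\}$ to form a chain under inclusion, which is not automatic.

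This chain condition is the main obstacle. My first attempt would be to refine the choice of clique partition, using the PEO so that each $Q_j$ lies in the closed neighborhood of a simplicial vertex. One would then hope to exploit that neighborhoods in chordal graphs restricted to a clique are nested with respect to the PEO. If this fails, I would switch to an induction in the style of Lemma~\ref{lem:reg-deg-complement}. Take a simplicial vertex $v$ of $G$, put the double-star-like subgraph on all edges incident to $N_G[v]\cup N_G[v]'$ into one co-chordal piece, in the manner of $DS_i$ in Theorem~\ref{thm:cd-mi}, and recurse on $G-N_G[v]$. That graph is chordal with $\alpha(G-N_G[v])\leq \alpha(G)-1$. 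The difficulty there is that edges $xy'$ with $x\in N_G(v)$ and $y\notin N_G[v]$ must be absorbed. I would test whether the piece consisting of all edges incident to $N_G[v]$ or $N_G[v]'$ is $2K_2$-free, using that $v$ is adjacent (in $\BE_e$) to all of $N_G[v]'$. If that check fails, I would fall back to bounding only $\reg$ via Corollary~\ref{cor:induction-sc} applied at $v$ and $v'$, which already suffices for the $\pd$ and $\reg$ equalities.
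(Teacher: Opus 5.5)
\textbf{There is a genuine gap.} Your reduction to $\cd(\BE_e(G))\le\alpha(G)$ is correct, but that bound is never proved, and the obstruction you flag cannot be fixed by a better choice of partition. Suppose every edge $xy'$ is charged to the class of its unprimed end $x$. Then two vertices in one class must be adjacent, or else $xx',uu'$ give a $2K_2$. They must also have nested closed neighborhoods: $w\in N_G[x]\setminus N_G[u]$ and $z\in N_G[u]\setminus N_G[x]$ would make $xw',uz'$ an induced $2K_2$. So you need a partition of $V$ into $\alpha(G)$ such cliques, and it need not exist. Let $G$ be a triangle $s x_1 x_2$ with paths $x_1p_1r_1$ and $x_2p_2r_2$ attached. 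It is chordal with $\alpha(G)=3$, and its only partition into three cliques is $\{s,x_1,x_2\},\{p_1,r_1\},\{p_2,r_2\}$. Yet $x_1p_1'$ and $x_2p_2'$ already induce a $2K_2$ in $\BE_e(G)$. The nestedness you hope to get from a PEO is the simple-vertex property of strongly chordal graphs, not of chordal graphs.

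Your second fallback also fails. In the $3$-sun (triangle $abc$ with $x\sim a,b$, $y\sim b,c$, $z\sim a,c$) the simplicial vertices are $x,y,z$. For $v=x$, the piece of all edges meeting $N_G[x]\cup N_G[x]'$ contains $az'$ and $by'$, while $ay',bz'\notin E(\BE_e(G))$; the other two choices are symmetric. Your last fallback drops the $\cd$ equality, which is part of the statement. Even for $\reg$ it is not yet an argument. Corollary~\ref{cor:induction-sc} at $v$ or $v'$ leaves the term $\reg(\BE_e(G)-N_{\BE_e(G)}[v])+1$, and that graph still contains the unprimed vertices of $N_G(v)$. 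It is therefore not the extended double cover of a smaller chordal graph, so induction does not apply, and viewing it inside $\BE_e(G-v)$ loses the $+1$. The appendix closes exactly this step. Since $vv'$ is bisimplicial, Lemma~\ref{lem:bisimplicial} and Corollary~\ref{cor:simplicial} give $\reg(\BE_e(G))=\max\{\reg(\BE_e(G-v)),\reg(\BE_e(G-N_G[v]))+1\}$, and $\alpha(G-N_G[v])=\alpha(G)-1$ finishes the induction.

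For $\cd$, the paper uses your partition $Q_i$ but charges edges differently. The edge $vv'$ goes to the class of $v$, and both $uv'$ and $vu'$ go to the class of the PEO-earlier endpoint $u$ of $uv$. Thus every edge of $S_i$ has an end in $Q_i$ preceding both of its ends, and forward neighborhoods are meant to do the work of your nestedness. Be aware, though, that the paper's check only exhibits a joining edge in $\BE_e(G)$, not in $S_i$, and this matters. Take edges $v_1v_2,v_1v_4,v_2v_3,v_2v_4,v_3v_4,v_4v_5$ with PEO $v_1<\cdots<v_5$. Then $I=\{v_1,v_3,v_5\}$ and $Q_{v_1}=\{v_1,v_2,v_4\}$, so $v_2v_3',v_4v_5'\in S_{v_1}$. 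But $v_2v_5'\notin E(\BE_e(G))$ and $v_4v_3'$ is assigned to $S_{v_3}$, so $S_{v_1}$ contains an induced $2K_2$. That graph is K\"onig--Egerv\'ary, so Theorem~\ref{thm:konig_exact} still gives $\cd=\alpha$ there. So the $\pd=\reg=\alpha$ part is secured by the appendix, but the $\cd$ equality for chordal graphs needs an argument that neither your proposal nor the paper's main-text proof currently provides.
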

\begin{proof}
Let $G=(V,E)$ be a chordal graph, and let $v_1<_{\textnormal{PEO}}v_2<_{\textnormal{PEO}}\dots<_{\textnormal{PEO}}v_n$ be a perfect elimination ordering of $G$. 
		
We construct a maximal independent set $I$ greedily along the $\textnormal{PEO}$ by iterating through the vertices from $v_1$ to $v_n$, and adding a vertex to $I$ if it is not adjacent to any previously selected vertex in $I$. Since $G$ is chordal, this greedy approach gives a maximum independent set, that is, $|I|=\alpha(G)=k$. 
Let $I=\{x_1,\dots,x_k\}$.
		
For every vertex $v\in V$, let $f(v)$ be the \textit{earliest} vertex in $I$ (with respect to the $\textnormal{PEO}$) that is in the closed neighborhood $N_G[v]$. Since $I$ is a maximal independent set, $N_G[v]\cap I\neq\emptyset$, making $f(v)$ well-defined for all vertices. 

We partition the vertex set $V$ into $k$ sets $Q_1,\dots,Q_k$, defined by 
$Q_i:=\{v\in V\colon f(v)=x_i\}$.\medskip 
		
{\em Claim} $1$: Every $Q_i$ is a clique in $G$.\medskip

{\em Proof of Claim} $1$: Let $u\in Q_i$. By the definition, $u\in N_G[x_i]$. Suppose, for a contradiction, that $u$ appears before $x_i$ in the $\textnormal{PEO}$ ($u<_{\textnormal{PEO}} x_i$). When the greedy algorithm processed $u$, it did not add $u$ to $I$ (since $u\neq x_i$). This implies $u$ was adjacent to some $x_j\in I$ that appeared before $u$. However, this means $x_j\in N_G[u]$ and $x_j<_{\textnormal{PEO}} u<_{\textnormal{PEO}} x_i$, which contradicts to $f(u)=x_i$. Therefore, every vertex $u\in Q_i$ must appear at or after $x_i$ in the $\textnormal{PEO}$. Since all vertices in $Q_i$ appear at or after $x_i$ and are adjacent to $x_i$, they form a subset of the closed forward neighborhood of $x_i$. By the fundamental property of a $\textnormal{PEO}$, the closed forward neighborhood of any vertex induces a clique. Thus, $Q_i$ is a clique as claimed.\medskip
		
{\em Claim} $2$: The graph $\BE_e(G)$ can be partitioned into $k$ co-chordal subgraphs.\medskip

{\em Proof of Claim} $2$: We partition the edges of $\BE_e(G)$ into $k$ subgraphs $S_1,\dots,S_k$ using the following assignment rules:
\begin{enumerate}
\item For any vertex $v \in V$, assign the edge $vv'$ to $S_{f(v)}$.
\item For any edge $uv \in E(G)$, exactly one of $u<_{\textnormal{PEO}} v$ or $v <_{\textnormal{PEO}} u$ holds. Assume $u<_{\textnormal{PEO}} v$. We assign both corresponding cross-edges, $uv'$ and $vu'$, to $S_{f(u)}$. 
\end{enumerate}
This partitions the edges of $\BE_e(G)$. We now prove that each $S_i$ is $2K_2$-free. 
		
Let $e_1=ab'$ and $e_2=cd'$ be any two edges in $S_i$. By our assignment rules, there exists a vertex $u_1\in \{a,b\}$ such that $u_1\in Q_i$ and $u_1 \leq_{\textnormal{PEO}} a,b$ (if $a=b$, then $u_1=a$). Similarly, there exists a vertex $u_2\in \{c,d\}$ such that $u_2\in Q_i$ and $u_2 \leq_{\textnormal{PEO}} c,d$. 
		
Since $Q_i$ is a clique, it follows that $u_1\in N_G[u_2]$. Without loss of generality, we may assume  that $u_1\leq_{\textnormal{PEO}} u_2$. 
By the definition of a $\textnormal{PEO}$, the closed forward neighborhood of $u_1$ forms a clique. Since $u_1\leq_{\textnormal{PEO}} a$ and $u_1\leq_{\textnormal{PEO}} b$ (and $u_1\in \{a,b\}$), both $a$ and $b$ belong to the closed forward neighborhood of $u_1$. Furthermore, since $u_1 \leq_{\textnormal{PEO}} u_2$ and $u_1\in N_G[u_2]$, the vertex $u_2$ also belongs to the closed forward neighborhood of $u_1$. 
		
Consequently, the set of vertices $\{a,b,u_2\}$ must induce a clique in $G$. Since $u_2\in \{c,d\}$, we have two cases to analyze.\medskip

{\em Case} $1$: $u_2=c$. Then the set $\{a,b,c\}$ is a clique so that $c\in N_G[b]$. By the definition of $\BE_e(G)$, we conclude that $cb'\in E(\BE_e(G))$.\medskip

{\em Case} $2$: $u_2=d$. Then the set $\{a,b,d\}$ is a clique so that $d \in N_G[a]$. Once again, by the definition of $\BE_e(G)$, we obtain that $ad'\in E(\BE_e(G))$.

In all possible cases, there exists an edge between $\{a,b'\}$ and $\{c,d'\}$ in $\BE_e(G)$. Therefore, $e_1$ and $e_2$ cannot form an induced $2K_2$. 
		
Since every $S_i$ is $2K_2$-free, the graph $\BE_e(G)$ is covered by $k=\alpha(G)$ co-chordal subgraphs, proving that $\cd(\BE_e(G))\leq \alpha(G)$. This completes the proof by Theorem~\ref{thm:ineqs}.
\end{proof}
	
\begin{remark}
Theorem~\ref{thm:chordal_exact} could also be proved using homological arguments by making use of independence complex concept. Since this idea of proof could be of independent interest, we provide it in Section~\ref{sec:append} (Appendix).
\end{remark}
%%%%%%%%%%%%%%%%%%%%%%%%%%%%%%%%%%%%%%%%%%%%%%%%%%%%%%%%%
\subsection{Comparability graphs}\label{sec:comparability}
We begin with recalling the definition of comparability graphs~\citep{BLS-gclass}. A  graph $G=(V,E)$ is called a \emph{comparability graph} if there exists a partial order $\leq$ on $V$ such that $xy\in E$ if and only if either $x\leq y$ or $y\leq x$ in $P$ for any two distinct vertices $x,y\in V$.

\begin{theorem}\label{thm:comparability}
For every comparability graph $G$, the equalities
\begin{displaymath}
\pd(\NE[G])=\reg(\BE_e(G))=\cd(\BE_e(G))=\alpha(G)
\end{displaymath}
hold.
\end{theorem}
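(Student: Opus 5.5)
By Theorem~\ref{thm:ineqs} it suffices to show $\cd(\BE_e(G))\leq \alpha(G)$ for a comparability graph $G$. The sandwich $\alpha(G)\leq \pd(\NE[G])\leq \reg(\BE_e(G))\leq \cd(\BE_e(G))\leq\alpha(G)$ then forces equality throughout. Fix a partial order $\leq$ on $V$ whose comparability graph is $G$. Cliques of $G$ are chains and independent sets are antichains, so $\alpha(G)$ is the width $k$ of the poset. By Dilworth's theorem, $V$ partitions into $k$ chains $C_1,\dots,C_k$. The plan mirrors the chordal case: assign every edge of $\BE_e(G)$ to one of $k$ classes indexed by the chains, and show that each class is $2K_2$-free.

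For the assignment, send the diagonal edge $vv'$ to the class of the chain containing $v$. A cross-edge $uv'$ with $uv\in E(G)$ has $u<v$ or $v<u$. The natural first attempt is to send both $uv'$ and $vu'$ to the class of the chain containing the smaller element $\min\{u,v\}$. To each edge $ab'$ of a class $S_i$ we then attach its \emph{root} $r(ab')\in\{a,b\}\cap C_i$, which is $\leq$ both $a$ and $b$.

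For two edges $ab'$ and $cd'$ in $S_i$, the roots $r_1$ and $r_2$ lie in the chain $C_i$, so we may assume $r_1\leq r_2$. Then $r_1\leq a,b$ and $r_1\leq r_2\in\{c,d\}$.
\begin{itemize}
\item If $r_1=a$ and $r_2=d$, then $a\leq d$, so $ad'$ is an edge and $\{ab',cd'\}$ is not an induced $2K_2$.
\item If $r_1=b$ and $r_2=c$, then $c$ is comparable to $b$, so $cb'$ is an edge and again no induced $2K_2$ arises.
\end{itemize}
The troublesome cases are the mixed ones, $r_1=a$ with $r_2=c$, and $r_1=b$ with $r_2=d$. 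Take the first: $a\leq c$ and $a\leq b$, and we need $a\sim d$ or $c\sim b$. This fails in general, since both $b$ and $d$ lie above $a$ and $c$ respectively but need not be comparable with the other endpoint. So the naive rule is insufficient, and this is the main obstacle.

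The fix is to exploit the asymmetry of the two orientations. I would try sending $uv'$ with $u<v$ (a ``lower-left'' edge) to the chain of $u$, and $vu'$ with $u<v$ (an ``upper-left'' edge) to the chain of the primed endpoint $u'$, that is, to the chain of $u$ again but with the root on the right side. Then one splits further. If the two edges in a class are both of the type where the root is on the left, the roots satisfy $a\leq c$, and transitivity gives $a\leq c\leq d$, so $ad'$ is an edge. When the root types differ, I would instead reassign using a second chain decomposition of the dual order, or use the upper end of the chain. Concretely, I would use $k$ classes, where class $i$ takes the edges whose left endpoint's minimum-chain index is $i$ under the ordering ``root $=$ smaller endpoint, measured on the left'', and check all four root configurations by transitivity: $x\leq y\leq z$ implies $x\sim z$. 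If this still fails, the fallback is the homological route of the Appendix, via Corollary~\ref{cor:induction-sc}. There one removes a minimal element $v$ of the poset, so that $N_G[v]$ is an up-set containing $v$, and one bounds $\reg(\BE_e(G)-N[v])$ by $\alpha(G-N_G[v])\leq\alpha(G)-1$, using induction on the (comparability) subgraphs.
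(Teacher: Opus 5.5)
As written, the proposal does not arrive at a proof. You declare your first assignment rule insufficient, and what replaces it is either the same rule relabelled or an unspecified alternative. Sending $vu'$ ``to the chain of $u'$'' is sending it to the chain of $u$, exactly as before. The alternatives (a chain decomposition of the dual order, ``the upper end of the chain'') are never made precise. Moreover, the ``root types differ'' cases you propose to repair are exactly the ones your two bullets had already settled.

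The declared obstacle is not real. Take edges $ab'$, $cd'$ in $S_i$ with roots $r_1\leq r_2$ in $C_i$. The root $r_2$ lies below \emph{both} $c$ and $d$, so transitivity gives $r_1\leq c$ and $r_1\leq d$. Hence if $r_1=a$ then $ad'$ is an edge, and if $r_1=b$ then $cb'$ is an edge. In your ``troublesome'' case $r_1=a$, $r_2=c$ you overlooked $a\leq c\leq d$; symmetrically, $b\leq d\leq c$ when $r_1=b$, $r_2=d$. You in fact note the first of these yourself a few lines later. You should also state that the joining edge has smaller endpoint $r_1\in C_i$, so it lies in $S_i$ itself. $2K_2$-freeness of the subgraph $S_i$ requires this, not merely membership in $\BE_e(G)$. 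With this observation your first rule is precisely the paper's construction: $H_i$ consists of the edges $cx'$ and $xc'$ with $c\in C_i$ and $c\leq x$. The proof then closes exactly as in the paper, via Dilworth's theorem and the sandwich of Theorem~\ref{thm:ineqs}.

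The homological fallback would not work as sketched. For a minimal $v$, $N_G[v]$ is the principal up-set of $v$, which is generally not a clique. For example, $C_4$ is the comparability graph of the poset $a,b<c,d$ and has no simplicial vertex. So $vv'$ need not be bisimplicial, and the Appendix argument is unavailable. Applying Corollary~\ref{cor:induction-sc} directly to $v$ in $\BE_e(G)$ does not help either. The graph $\BE_e(G)-N_{\BE_e(G)}[v]$ retains the left copies of $N_G(v)$, so it is not $\BE_e(G-N_G[v])$, and induction gives no bound by $\alpha(G-N_G[v])$. Likewise, $\BE_e(G)-v$ still contains $v'$ and is not $\BE_e(G-v)$.
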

\begin{proof}
Let $P=(V,\leq)$ be a poset (partially ordered set) whose comparability graph is $G$. Since the independent sets of $G$ correspond to the anti-chains of $P$, 
the width of $P$ is $k=\alpha(G)$. Recall that the bipartite graph $\BE_e(G)$ has bipartition $V\cup V'$ such that
\begin{displaymath}
xy'\in E(\BE_e(G)) \quad\Longleftrightarrow\quad x\leq y\;\textnormal{or}\;y\leq x.
\end{displaymath}
By Dilworth's theorem, the associated poset $P$ can be partitioned $V=C_1\cup\cdots \cup C_k$ into $k$ chains of $P$. For each $i\in [k]$, we define
a spanning subgraph $H_i$ of $\BE_e(G)$ by
\begin{displaymath}
E(H_i)=\bigcup_{\substack{c\in C_i,\ x\in V\\ c\leq x}}\{cx',\ xc'\}.
\end{displaymath}
We claim first that these subgraphs cover $E(\BE_e(G))$. Indeed, for any edge
$xy'$, vertices $x$ and $y$ are comparable in $P$, say $x\leq y$. Then, if $x\in C_i$, we conclude that $xy'\in E(H_i)$. Furthermore, observe that $E(H_i)$ also contains the diagonal edge $xx'$ for every vertex $x\in V$.

We next claim that every $H_i$ is a co-chordal graph. Let $i\in [k]$ be given, and write $C=C_i$. Consider two vertex-disjoint edges of $H_i$.
There are three cases to separate.

\begin{enumerate}
\item The edges have the form $cx'$ and $dy'$, where $c,d\in C$, $c\leq x$, and $d\leq y$ in $P$. Since $C$ is a chain, after interchanging the edges if
necessary, we may assume $c\leq d$. Then, $c\leq d\leq y$ so that $cy'\in E(H_i)$.

\item The edges have the form $xc'$ and $yd'$, where $c,d\in C$, $c\leq x$, and $d\leq y$ in $P$. Once again, assume $c\leq d$. Then $c\leq d\leq y$, so
$yc'\in E(H_i)$.

\item The edges have the form $cx'$ and $yd'$, where $c,d\in C$, $c\leq x$, and $d\leq y$ in $P$. Since $c$ and $d$ lie in the same chain, either
$c\leq d$ or $d\leq c$. In either case, the definition of $H_i$ implies that $cd'\in E(H_i)$.
\end{enumerate}

Therefore, in every case, a cross-edge prevents the two edges from inducing a $2K_2$ in $H_i$ as claimed.
 
As a consequence, we obtain that
\begin{displaymath}
\cd(\BE_e(G))\leq k=\alpha(G).
\end{displaymath}
Now, the result follows from Theorem~\ref{thm:ineqs}.
\end{proof}

%%%%%%%%%%%%%%%%%%%%%%%%%%%%%%%%%%%%%%%%%%%%%%%%%%%%%%%%%%%
\section{Further Comments}
We note that all graph classes (except possibly non-bipartite K{\"{o}}nig-Egerv{\'{a}}ry graphs) in Theorem~\ref{thm:thm2} constitute a subclass of strongly perfect graphs. We may therefore ask whether the equality $\cd(\BE_e(G))= \alpha(G)$ holds for the broad class of strongly perfect graphs, since $\alpha(G)=\Gamma(G)=\IR(G)$ for such graphs. We demonstrate that this is in fact false via the following counterexample. 
	
\begin{example}\label{example:cd-alpha}
Consider the graph $G$ depicted in Figure~\ref{fig:cd-alpha}, which is constructed by taking a $6$-cycle $C_6$ on the vertex set $\{1,2,\dots,6\}$
(with edges in cyclic fashion) and adding a vertex $z$ adjacent to vertices $1,2,4,5$. It is straightforward to verify that $\alpha(G)=\chi(\overline{G})=3$. Notice that $G$ is a $\CNS$-graph, meaning a reduction does not alter its closed neighborhood hypergraph. Computations via Macaulay2~\cite{M2} confirm that $\reg(\BE_e(G))=3$ (over $\Q$). To prove that $\cd(\BE_e(G)) \geq 4$, we leverage the conflict-graph formulation established in Proposition~\ref{prop:chi-cochord}. This time, computations via SageMath~\cite{sagemath} confirm that $\chi(\CE(G))=4$. Therefore, we conclude that $\cd(\BE_e(G))\geq 4>3=\alpha(G)$. This graph serves as a counterexample showing that $\cd(\BE_e(G))$ can strictly exceed $\alpha(G)$ even in strongly perfect graphs. 
\begin{figure}[ht]
\centering     %%% 
\begin{tikzpicture}[scale=0.7]
\node [nod2] at (0,2) (v1) [label=left:$3$]{};
\node [nod2] at (0,4) (v2) [label=left:$2$]{}
           	edge [] (v1);
\node [nod2] at (2,6) (v3) [label=above:$1$]{}
           	edge [] (v2);         	      
\node [nod2] at (4,4) (v4) [label=right:$6$]{}
           	edge [] (v3);
\node [nod2] at (4,2) (v5) [label=right:$5$]{}
           	edge [] (v4);
\node [nod2] at (2,0) (v6) [label=below:$4$]{}
           	edge [] (v5)
           	edge [] (v1);
\node [nod2] at (2,3) (v7) [label=right:$z$]{}
           	edge [] (v2)
           	edge [] (v3)
           	edge [] (v5)
           	edge [] (v6);         	         	         	
\end{tikzpicture}
\caption{The graph $G$ in Example~\ref{example:cd-alpha}.}
\label{fig:cd-alpha}
\end{figure}
\end{example}
	
Therefore, the following seems to be a more natural question in this direction.
\begin{question}
Is it true that for any strongly perfect graph $G$, the equality $\reg(\BE_e(G))=\alpha(G)$ holds?
\end{question}

We remark that when passing from strongly perfect graphs to their superclass of perfect graphs, the equalities guaranteed by Theorem~\ref{thm:dom-eq} are no longer available. However, we believe that known structural properties of perfect graphs are still strong enough to provide a clear picture in the computations of relevant algebraic parameters. 

\begin{question}\label{ques:perfect}
Characterize all perfect graphs $G$ such that the equality
$\reg(\BE_e(G))=\IR(G)$ hold?
\end{question} 
We note that the upper irredundance number $\IR(G)$ is nothing but the induced matching number of the extended double cover $\BE_e(G)$	by Proposition~\ref{prop:im-double}. In other words, Question~\ref{ques:perfect} basically asks for which perfect graphs $G$, the regularity of $\BE_e(G)$ coincides with its induced matching number.

%%%%%%%%%%%%%%%%%%%%%%%%%%%%%%%%%%%%%%%%%%%%%%%%%%%%%%%%%%%%%%
%%%%%%%%%%%%%%%%%%%%%%%%%%%%%%%%%%%%%%%%%%%%%%%%%%%%%%%%%%
\section*{Declarations}
\noindent\textbf{Data Availability Statement:} Data sharing is not applicable to this article as no new data were created or analyzed in this study.\medskip

\noindent\textbf{Conflict of Interest:} The authors declare that they have no relevant financial or non-financial interests to disclose.\medskip

\noindent\textbf{Use of AI:} No AI tools were used in generating the mathematical content or proofs.

%%%%%%%%%%%%%%%%%%%%%%%%%%%%%%%%%%%%%%%%%%%%%%%%%%%%%%%%%%%
\bibliographystyle{abbrv}  
%\bibliography{cnh}

%%%%%%%%%%%%%%%%%%%%%%%%%%%%%%%%%%%%%%%%%%%%%%%%%%%%%%%
\section*{Appendix: A Topological Proof for Chordal graphs}\label{sec:append}
For chordal graphs, in Theorem~\ref{thm:chordal_exact}, we gave a combinatorial edge-routing proof of the exact equalities $\pd(\NE[G])=\reg(\BE_e(G))=\alpha(G)$. As a complement to this proof, we present an alternative homological approach to bounding the regularity of extended double covers for chordal graphs. This method relies heavily on the structure of independence complexes and the Mayer-Vietoris sequence.
	
Recall that an edge $e=xy$ of a bipartite graph $B=(X,Y;E)$ is said to be a \emph{bisimplicial edge} if $N_B[e]:=N_B[x]\cup N_B[y]$ is a biclique of $B$.
	
\begin{fact}
If $v$ is a simplicial vertex of $G$, then $vv'$ is a bisimplicial edge of $\BE_e(G)$.
\end{fact}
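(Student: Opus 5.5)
The plan is to unwind the definitions and check the biclique condition directly. Let $v$ be a simplicial vertex of $G$, so $N_G[v]$ is a clique. In $\BE_e(G)$ the neighborhoods of the two endpoints of $vv'$ are determined by $N_G[v]$:
\begin{displaymath}
N_{\BE_e(G)}(v)=\{u'\colon u\in N_G[v]\},\qquad N_{\BE_e(G)}(v')=\{u\colon u\in N_G[v]\}.
\end{displaymath}
This holds because $v$ is adjacent to $u'$ exactly when $u=v$ or $uv\in E(G)$, and symmetrically for $v'$. Hence
\begin{displaymath}
N_{\BE_e(G)}[vv']=N_G[v]\cup (N_G[v])',
\end{displaymath}
and the bipartition of this set is $X=N_G[v]$ and $Y=(N_G[v])'$.

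It remains to show that the induced subgraph on $X\cup Y$ is a complete bipartite graph. Take any $a\in N_G[v]$ and $b'\in (N_G[v])'$. If $a=b$, then $ab'$ is a diagonal edge, so it is present. If $a\neq b$, then $a,b\in N_G[v]$ and $N_G[v]$ is a clique, so $ab\in E(G)$; by the definition of $\BE_e(G)$ this gives $ab'\in E(\BE_e(G))$. Therefore every vertex of $X$ is adjacent to every vertex of $Y$, and $N_{\BE_e(G)}[vv']$ is a biclique. So $vv'$ is bisimplicial.

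No step here is a real obstacle. The only point needing care is the convention that "biclique" means the subgraph induced on $N_B[x]\cup N_B[y]$ is complete bipartite with respect to its bipartition. No edges inside $X$ or inside $Y$ can arise, since $\BE_e(G)$ is bipartite.
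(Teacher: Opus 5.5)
Your proof is correct and is the direct definitional check the paper has in mind; the paper states this as a Fact with no proof given. Your computation $N_{\BE_e(G)}[vv']=N_G[v]\cup (N_G[v])'$, together with the clique property of $N_G[v]$ (and the diagonal edges for $a=b$), is exactly what justifies it.
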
  
	
Let $\Delta$ be a simplicial complex and let $\Delta_1$ and $\Delta_2$ be complexes such that $\Delta=\Delta_1\cup \Delta_2$. Then, the following sequence is exact:
	
\begin{displaymath}
\cdots\to \widetilde{H}_i(\Delta_1\cap \Delta_2)\to \widetilde{H}_i(\Delta_1)\oplus \widetilde{H}_i(\Delta_2)\to \widetilde{H}_i(\Delta)\to \widetilde{H}_{i-1}(\Delta_1\cap \Delta_2)\to\cdots
\end{displaymath}
	
The following decomposition result is based on an application of Hochster’s formula together with the above Mayer-Vietoris exact sequence, for which we note that a restriction to an induced subcomplex commutes with union and intersection. 
	
\begin{proposition}\label{prop:reg-union}
If $\Delta=\Delta_1\cup \Delta_2$ is a union of simplicial complexes, then
\begin{displaymath}
\reg(\Delta)\leq \max \{\reg(\Delta_1),\reg(\Delta_2), \reg(\Delta_1\cap \Delta_2)+1\}.
\end{displaymath}
\end{proposition}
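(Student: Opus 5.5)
We argue directly from Hochster's formula in the form stated in Section~\ref{sec:prep}. Here the regularity of a simplicial complex $\Gamma$ on $V$ means $\reg(\Gamma)=\max\{j\colon \widetilde{H}_{j-1}(\Gamma[S];\kk)\neq 0 \textnormal{ for some } S\subseteq V\}$. This agrees with $\reg(\HE)$ when $\Gamma=\Ind(\HE)$ and makes sense for an arbitrary complex. We view $\Delta_1$ and $\Delta_2$ on the common vertex set $V$ of $\Delta$, allowing some vertices to be absent from a subcomplex. Put $r:=\max\{\reg(\Delta_1),\reg(\Delta_2),\reg(\Delta_1\cap\Delta_2)+1\}$. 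It suffices to show that $\widetilde{H}_{j-1}(\Delta[S])=0$ for every $S\subseteq V$ and every $j>r$.

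\textbf{Step 1: restriction commutes with union and intersection.} For any $S\subseteq V$ we have
\begin{displaymath}
\Delta[S]=\Delta_1[S]\cup\Delta_2[S]\quad\textnormal{and}\quad (\Delta_1\cap\Delta_2)[S]=\Delta_1[S]\cap\Delta_2[S].
\end{displaymath}
Both identities hold because a face $A\subseteq S$ lies in $\Delta$ exactly when it lies in $\Delta_1$ or in $\Delta_2$, and it lies in both exactly when it lies in the intersection.

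\textbf{Step 2: the Mayer--Vietoris sequence.} Fix $S$ and apply the reduced Mayer--Vietoris sequence to the decomposition in Step~1. The relevant portion is
\begin{displaymath}
\widetilde{H}_{j-1}(\Delta_1[S])\oplus\widetilde{H}_{j-1}(\Delta_2[S])\to\widetilde{H}_{j-1}(\Delta[S])\to\widetilde{H}_{j-2}((\Delta_1\cap\Delta_2)[S]).
\end{displaymath}
Let $j>r$. Then $j>\reg(\Delta_i)$ for $i=1,2$, so both groups on the left vanish by the definition of regularity. Also $j-1>\reg(\Delta_1\cap\Delta_2)$, so $\widetilde{H}_{(j-1)-1}((\Delta_1\cap\Delta_2)[S])=0$, and the group on the right vanishes. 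By exactness, $\widetilde{H}_{j-1}(\Delta[S])=0$. Since $S$ was arbitrary, $\reg(\Delta)\leq r$.

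\textbf{The one delicate point: degenerate cases.} The main care needed is the behaviour of the void and empty complexes in reduced homology. If $\Delta_1[S]\cap\Delta_2[S]=\{\emptyset\}$, then its only nonzero reduced homology is $\widetilde{H}_{-1}\cong\kk$. The reduced Mayer--Vietoris sequence still holds in this setting, with augmentation in degree $-1$. Because $\emptyset$ is a face of every induced subcomplex, the nonvanishing $\widetilde{H}_{-1}(\{\emptyset\})$ forces $\reg(\Delta_1\cap\Delta_2)\geq 0$. The inequality therefore already accounts for this contribution, and the exactness argument in Step~2 goes through unchanged. This is the step I would check most carefully; everything else is formal.
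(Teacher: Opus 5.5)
Your proof is correct and follows essentially the same approach as the paper: the paper writes out no proof, only the remark that the result follows from Hochster's formula and the Mayer--Vietoris sequence because restriction to induced subcomplexes commutes with union and intersection, and your Steps 1 and 2 carry this out. Your check of the degenerate case is also sound, and it can be simplified: $j>r\geq \reg(\Delta_1\cap\Delta_2)+1\geq 1$ already forces $j\geq 2$, so $\widetilde{H}_{-1}$ of the intersection never enters the argument.
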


\begin{lemma}\label{lem:bisimplicial}
Let $B=(X,Y;E)$ be a bipartite graph. If $e=xy$ is a bisimplicial edge of $B$, then
\begin{displaymath}
\reg(B)=\max \{\reg(B-\{x,y\}), \reg(B-N_B[e])+1\}.
\end{displaymath}
\end{lemma}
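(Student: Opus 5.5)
The plan is to prove the two inequalities separately. Throughout, write $\Delta=\Ind(B)$, $\Delta_1=\Ind(B-x)$ (the faces avoiding $x$) and $\Delta_2=\Ind(B-y)$ (the faces avoiding $y$). Since $xy$ is an edge, no independent set contains both $x$ and $y$. Hence $\Delta=\Delta_1\cup\Delta_2$ and $\Delta_1\cap\Delta_2=\Ind(B-\{x,y\})$.

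\textbf{Lower bound.} Both $B-\{x,y\}$ and $B-N_B[e]$ are induced subgraphs of $B$, so $\reg(B)\geq\reg(B-\{x,y\})$ by Hochster's formula. For the second term, pick $S\subseteq V(B)\setminus N_B[e]$ with $\widetilde H_{j-1}(\Ind(B)[S])\neq 0$, where $j=\reg(B-N_B[e])$. Then $\Ind(B)[S\cup\{x,y\}]$ is the join of $\Ind(B)[S]$ with two disjoint points, because $x,y$ have no neighbours in $S$ and are adjacent to each other. That join is the suspension of $\Ind(B)[S]$, so it has nonzero $\widetilde H_j$, which gives $\reg(B)\geq \reg(B-N_B[e])+1$. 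Note this half does not use bisimpliciality.

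\textbf{Upper bound.} Apply Proposition~\ref{prop:reg-union} to the cover above, getting
\[
\reg(B)\leq \max\{\reg(B-x),\reg(B-y),\reg(B-\{x,y\})+1\}.
\]
The last term is too weak, so instead I would argue directly with Mayer--Vietoris on induced subcomplexes $\Delta[S]$. Only sets $S$ containing both $x$ and $y$ matter; otherwise $\Delta[S]$ is already a restriction of $\Delta_1$ or $\Delta_2$.

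The key step is to use bisimpliciality. Every vertex of $N(x)\setminus\{y\}\subseteq Y$ is adjacent to every vertex of $N(y)\setminus\{x\}\subseteq X$. So any independent set containing a neighbour of $x$ other than $y$ avoids all of $N(y)\setminus\{x\}$, and symmetrically. I expect this to show that $\Delta_1[S]$ and $\Delta_2[S]$ are cones, or homotopy equivalent to simpler complexes. For instance, in $\Delta_2[S]$ the vertex $x$ is a cone point over the part avoiding $N(x)$. One then computes that $\Delta[S]$ is either contractible or homotopy equivalent to $\Sigma\,\Ind(B)[S\setminus N_B[e]]$. The cleanest formulation is a splitting by cases: if $S$ meets neither $N(x)\setminus\{y\}$ nor $N(y)\setminus\{x\}$, we get the suspension; if $S$ meets exactly one of them, a cone argument gives contractibility, or reduces to $B-\{x,y\}$.

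The main obstacle is the remaining case, where $S$ meets both $N(x)\setminus\{y\}$ and $N(y)\setminus\{x\}$. There I would show that the inclusion $\Delta_1\cap\Delta_2[S]\to\Delta_i[S]$ induces the right maps in homology. Equivalently, using the star/link decomposition at $x$, the link of $x$ in $\Delta[S]$ is $\Ind(B)[S\setminus N[x]]$, and the biclique condition makes this a cone, with apex any remaining vertex of $N(y)\setminus\{x\}$, or makes it equal to $\Ind(B)[S\setminus N_B[e]]$ joined with nothing. Deleting $x$ then leaves $\Delta[S]\simeq\Delta[S\setminus x]$, which is a subcomplex of $\Ind(B-x)$, and one repeats the same argument for $y$. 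Assembling the cases bounds every nonzero $\widetilde H_{j-1}(\Delta[S])$ by $\max\{\reg(B-\{x,y\}),\reg(B-N_B[e])+1\}$, which completes the equality.
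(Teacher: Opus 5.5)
Your lower bound is correct and is essentially the paper's. The upper bound has a genuine gap, in two places.

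First, you dismiss the sets $S$ that contain only one of $x,y$ as ``already a restriction of $\Delta_1$ or $\Delta_2$''. That only bounds their homology by $\reg(B-x)$ or $\reg(B-y)$. Showing that these are at most $\max\{\reg(B-\{x,y\}),\reg(B-N_B[e])+1\}$ is as hard as the lemma itself. The same objection applies to the first two terms of your Mayer--Vietoris bound, not just the last. Take the path with edges $ax,xy$. The set $S=\{a,x\}$ has $\widetilde H_0(\Ind(B)[S])\neq 0$, so it contributes $1>\reg(B-\{x,y\})=0$. It is controlled only by the $+1$ term, and your argument never produces that term for such $S$.

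Second, when $S$ meets both $N_B(x)\setminus\{y\}$ and $N_B(y)\setminus\{x\}$, you claim $\Ind(B)[S\setminus N_B[x]]$ is a cone with apex some $c\in N_B(y)\setminus\{x\}$. This is false. Bisimpliciality makes $c$ adjacent to all of $N_B(x)$, but $c$ may also have neighbours outside $N_B[e]$. For example, take $X=\{x,c\}$, $Y=\{y,a,r\}$ and $E=\{xy,xa,cy,ca,cr\}$, with $S=V(B)$:
\begin{itemize}
\item the edge $xy$ is bisimplicial;
\item the link of $x$ in $\Ind(B)$ is $\Ind$ of the edge $cr$, i.e.\ $\SA^0$, not a cone;
\item $\Ind(B)$ is contractible while $\Ind(B-x)\simeq\SA^0$, so ``deleting $x$'' does not preserve homotopy type.
\end{itemize}
Your ``exactly one'' subcase is also misdescribed. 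There $x$ or $y$ is a leaf of $B[S]$, and $\Ind(B)[S]\simeq\Sigma\,\Ind(B)[S\setminus N_B[e]]$, which need not be contractible.

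The missing idea is to use a different cover before applying Mayer--Vietoris. Bisimpliciality says exactly that an independent set avoiding $x$ and $y$ cannot meet both $N_B(x)\setminus\{y\}$ and $N_B(y)\setminus\{x\}$. So such a set lies in $\Ind(B-N_B[x])$ or in $\Ind(B-N_B[y])$. Hence $\Ind(B)=\bigl(x\ast\Ind(B-N_B[x])\bigr)\cup\bigl(y\ast\Ind(B-N_B[y])\bigr)$, and the intersection is $\Ind(B-N_B[e])$. This is the paper's decomposition.

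Proposition~\ref{prop:reg-union} applied to this cover handles all $S$ at once:
\begin{itemize}
\item an induced subcomplex of either cone is either a cone, or an induced subcomplex of $\Ind(B-N_B[x])$ or $\Ind(B-N_B[y])$;
\item both of these are induced subcomplexes of $\Ind(B-\{x,y\})$;
\item the intersection supplies precisely the $\reg(B-N_B[e])+1$ term.
\end{itemize}
The same cover shows that for $x,y\in S$ one always has $\Ind(B)[S]\simeq\Sigma\,\Ind(B)[S\setminus N_B[e]]$. So the answer you predicted for those $S$ is right, but the route you sketched to it does not work.
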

\begin{proof}
Since $e=xy$ is bisimplicial, we observe that
\begin{align*}
&\Ind(B)=(x\ast \Ind(B-N_B[x]))\cup (y\ast \Ind(B-N_B[y]))\\
&\textnormal{and}\\
&(x\ast \Ind(B-N_B[x]))\cap (y\ast \Ind(B-N_B[y]))=\Ind(B-N_B[e]),
\end{align*}
where $z\ast \Delta$ denotes the cone of a simplicial complex $\Delta$ with apex $z$. Indeed, let $S$ be an independent set in $B$. If $S$ contains $x$ or $y$, then it belongs to the corresponding cone complex. Otherwise, it can not intersect both $N_B(x)\setminus \{y\}$ and $N_B(y)\setminus \{x\}$, since $e=xy$ is bisimplicial. If it avoids $N_B(x)\setminus \{y\}$, then $S\cup \{y\}$ is an independent set in $B$. Similarly, if it avoids $N_B(y)\setminus \{x\}$, then $S\cup \{x\}$ is an independent set in $B$. This proves the union equality, and a similar argument applies to the intersection. 
		
Recall that $\reg(z\ast \Delta)=\reg(\Delta)$ for any simplicial complex $\Delta$. Since $\Ind(B-N_B[x])$ and $\Ind(B-N_B[y])$ are induced subcomplexes of $\Ind(B-\{x,y\})$, we conclude that
\begin{displaymath}
\max\{\reg(\Ind(B-N_B[x])),\reg(\Ind(B-N_B[y]))\}\leq \reg(\Ind(B-\{x,y\})).
\end{displaymath}
Therefore, by Proposition~\ref{prop:reg-union}, the following inequalities
\begin{align*}
\reg(B)&\leq \max\{\reg(B-N_B[x]), \reg(B-N_B[y]), \reg(B-N_B[e])+1\}\\
&\leq \max \{\reg(B-\{x,y\}), \reg(B-N_B[e])+1\}
\end{align*}
hold.	
		
For the converse, we note that if we write $R:=V((B-N_B[e]))\cup \{x,y\}$, then $B[R]\cong (B-N_B[e])\cup K_2$ is an induced subgraph of $B$; hence, $\reg(B[R])=\reg(B-N_B[e])+1\leq \reg(B)$. Similarly, since $B-\{x,y\}$ is also an induced subgraph of $B$, we obtain that 
\begin{displaymath}
\reg(B)\geq \max \{\reg(B-\{x,y\}), \reg(B-N_B[e])+1\},
\end{displaymath}
which completes the proof.
\end{proof}
	
\begin{corollary}\label{cor:simplicial}
If $v$ is a simplicial vertex of $G$, then
\begin{displaymath}
\reg(\BE_e(G))=\max \{\reg(\BE_e(G-v)),\reg(\BE_e(G-N_G[v]))+1\}.
\end{displaymath}
\end{corollary}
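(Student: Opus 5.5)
The statement to prove is Corollary~\ref{cor:simplicial}: for a simplicial vertex $v$ of $G$,
\[
\reg(\BE_e(G))=\max \{\reg(\BE_e(G-v)),\reg(\BE_e(G-N_G[v]))+1\}.
\]
The plan is to apply Lemma~\ref{lem:bisimplicial} to $B:=\BE_e(G)$ with the edge $e=vv'$. By the preceding Fact, $e$ is bisimplicial because $v$ is simplicial. The lemma then gives
\begin{displaymath}
\reg(B)=\max\{\reg(B-\{v,v'\}),\reg(B-N_B[e])+1\}.
\end{displaymath}
It remains to identify the two graphs on the right-hand side with extended double covers.

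For the first graph, deleting $v$ and $v'$ from $\BE_e(G)$ leaves the vertex set $(V\setminus\{v\})\cup(V\setminus\{v\})'$. The adjacency $x y'$, meaning $x=y$ or $xy\in E(G)$, is unchanged among the remaining vertices. Hence $B-\{v,v'\}=\BE_e(G-v)$.

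For the second graph, first compute $N_B[e]=N_B[v]\cup N_B[v']$. Since $N_B(v)=\{u'\colon u\in N_G[v]\}$ and $N_B(v')=\{u\colon u\in N_G[v]\}$, we get
\[
N_B[e]=N_G[v]\cup N_G[v]'.
\]
Removing this set leaves the vertex set $W\cup W'$ with $W=V\setminus N_G[v]$, with the induced adjacency. Therefore $B-N_B[e]=\BE_e(G[W])=\BE_e(G-N_G[v])$.

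Substituting both identifications into the lemma gives the claim. The only step needing care is the computation of $N_B[e]$, and it is immediate from the definition of $\BE_e$, so I expect no real obstacle. As a degenerate case, if $G-N_G[v]$ is empty, the second term is read as $0+1=1$; this is consistent, since $vv'$ alone forces $\reg(B)\geq 1$.
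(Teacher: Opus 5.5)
Your proposal is correct and follows the paper's proof exactly. The paper also applies Lemma~\ref{lem:bisimplicial} to the bisimplicial diagonal edge $vv'$ of $\BE_e(G)$ and uses the identifications $\BE_e(G)-\{v,v'\}=\BE_e(G-v)$ and $\BE_e(G)-N_{\BE_e(G)}[vv']=\BE_e(G-N_G[v])$. You verify these identifications, including the computation of $N_B[vv']$, in more detail than the paper does.
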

\begin{proof}
Since $v$ is a simplicial vertex, the diagonal edge $vv'$ is bisimplicial in $\BE_e(G)$. Thus, the claim follows from Lemma~\ref{lem:bisimplicial} together with
$\BE_e(G)-\{v,v'\}=\BE_e(G-v)$ and $\BE_e(G)-N_{\BE_e(G)}[vv']=\BE_e(G-N_G[v])$.
\end{proof}
	
\begin{theorem}\label{thm:hom-chordal}
If $G$ is a chordal graph, then
\begin{displaymath}
\pd(\NE[G])=\reg(\BE_e(G))=\alpha(G).
\end{displaymath}
\end{theorem}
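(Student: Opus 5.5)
By Theorem~\ref{thm:ineqs} we already have $\alpha(G)\leq \pd(\NE[G])\leq \reg(\BE_e(G))$ for every graph. It therefore suffices to prove the upper bound $\reg(\BE_e(G))\leq \alpha(G)$ for chordal $G$, and this time without co-chordal coverings. I would argue by induction on $|G|$, using Corollary~\ref{cor:simplicial} as the recursive engine. The base case is the empty graph, or $G=K_1$, where $\BE_e(K_1)=K_2$ has regularity $1=\alpha(K_1)$. Since $\BE_e$ is taken for arbitrary graphs here, I allow isolated vertices in the induction. Chordality is hereditary, so every induced subgraph that appears is again chordal.

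For the inductive step, I pick a simplicial vertex $v$ of $G$, which exists because $G$ is chordal. Corollary~\ref{cor:simplicial} then gives
\begin{displaymath}
\reg(\BE_e(G))=\max\{\reg(\BE_e(G-v)),\ \reg(\BE_e(G-N_G[v]))+1\}.
\end{displaymath}
Both $G-v$ and $G-N_G[v]$ are induced subgraphs of $G$, hence chordal, so the induction hypothesis applies to each. For the first term, $\reg(\BE_e(G-v))\leq \alpha(G-v)\leq \alpha(G)$. For the second term I need the key inequality $\alpha(G-N_G[v])+1\leq \alpha(G)$. This holds for any vertex $v$: if $I$ is a maximum independent set of $G-N_G[v]$, then $I\cup\{v\}$ is independent in $G$, because no vertex of $I$ lies in $N_G[v]$. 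Hence $\reg(\BE_e(G-N_G[v]))+1\leq \alpha(G-N_G[v])+1\leq \alpha(G)$.

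If $G-N_G[v]$ is empty, the second term should be read as $0+1=1\leq\alpha(G)$. Here I must check that Lemma~\ref{lem:bisimplicial} still holds when $B-N_B[e]$ is empty; in that case $B$ contains the edge $xy$, so its regularity is at least $1$. Combining the two bounds gives $\reg(\BE_e(G))\leq \alpha(G)$, and the chain from Theorem~\ref{thm:ineqs} collapses to equality.

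The only real work is already contained in Lemma~\ref{lem:bisimplicial} and Corollary~\ref{cor:simplicial}. In particular one must verify that $\BE_e(G)-N_{\BE_e(G)}[vv']=\BE_e(G-N_G[v])$. This holds because $N_{\BE_e(G)}(v)=N_G[v]'$ and $N_{\BE_e(G)}(v')=N_G[v]$, so removing $N[vv']$ deletes exactly the vertices of $N_G[v]$ and their primed copies. It is also the point where simpliciality is needed, to make $vv'$ bisimplicial. Once that is accepted, the induction above is routine.
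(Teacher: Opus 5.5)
Your proposal is correct and is essentially the paper's own appendix proof. The lower bound comes from Theorem~\ref{thm:ineqs}. The upper bound is proved by induction through Corollary~\ref{cor:simplicial} at a simplicial vertex $v$, bounding the two terms by $\alpha(G-v)\le\alpha(G)$ and $\alpha(G-N_G[v])+1\le\alpha(G)$.

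Your extra bookkeeping fills in details the paper leaves implicit without changing the route. That includes allowing isolated vertices and empty graphs in the induction, justifying $\alpha(G-N_G[v])+1\le\alpha(G)$, and checking $\BE_e(G)-N_{\BE_e(G)}[vv']=\BE_e(G-N_G[v])$.
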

\begin{proof}
We already know the (in)equalities
\begin{displaymath}
\alpha(G)=\Gamma(G)=\IR(G)=\im(\BE_e(G))\leq \pd(\NE[G])\leq \reg(\BE_e(G))
\end{displaymath}
hold for a chordal graph $G$ by Theorems~\ref{thm:dom-eq} and~\ref{thm:ineqs}. Now, let $v$ be a simplicial vertex of $G$. By Corollary~\ref{cor:simplicial} together with an induction, we conclude that
\begin{align*}
\reg(\BE_e(G))&=\max \{\reg(\BE_e(G-v)),\reg(\BE_e(G-N_G[v]))+1\},\\
&\leq \max \{\alpha(G-v),\alpha(G-N_G[v])+1\},\\
&\leq \alpha(G).
\end{align*}
\end{proof}
%%%%%%%%%%%%%%%%%%%%%%%%%%%%%%%%%%%%%%%%%%%%%%%%%%%%%%%
\end{document}